\newtheorem{theorem}{Theorem}[section]
\newtheorem{lem}[theorem]{Lemma}
\newtheorem{prop}[theorem]{Proposition}
\newtheorem{example}[theorem]{Example}
\newtheorem{remark}[theorem]{Remark}
\newtheorem{notation}[theorem]{Notation}
\theoremstyle{definition}
\newtheorem{defin}[theorem]{Definition}
\newcommand\pf{\begin{proof}}
\newcommand\epf{\end{proof}}
\newcommand{\N}{\mathbb{N}}
\newcommand{\C}{\mathbb{C}}
\numberwithin{equation}{subsection}
\title{Quantum automorphism group of the lexicographic product of finite regular graphs}
\author{Arthur Chassaniol}
\address{}
\begin{document}

\maketitle

\textbf{Abstract.} We study the quantum automorphism group of the lexicographic product of two finite regular graphs, providing a quantum generalization of Sabidussi's structure theorem on the automorphism group of such a graph.\bigskip\bigskip\smallskip

\section{Introduction}

A quantum permutation group on $n$ points is a compact quantum group acting faithfully on the classical space consisting of $n$ points. The following facts were discovered by Wang \cite{art4}.

\begin{enumerate}
 \item There exists a largest quantum permutation group on $n$ points, now denoted $S_n^+$, and called \textsl{the} quantum permutation groups on $n$ points.
\item The quantum group $S_n^+$ is infinite-dimensional if $n \geq 4$, and hence in particular an infinite compact quantum group can act faithfully on a finite classical space. 
\end{enumerate}

Very soon after Wang's paper \cite{art4}, the representation theory of $S_n^+$ was described  by Banica \cite{art8}: it is similar to the one of $SO(3)$ and can be described using tensor categories of non-crossing partitions. This description, further axiomatized and generalized by Banica-Speicher \cite{bs}, led later to spectacular connections with free probability theory, see e.g. \cite{ks}.

The next natural question was the following one: does $S_n^+$ have many non-classical quantum subgroups, or is it isolated as an infinite quantum group acting faithfully on a finite classical space?

In order to find quantum subgroups of $S_n^+$, the quantum automorphism group of a finite graph was defined in \cite{art12,art11}. This construction indeed produced many examples of non-classical quantum permutation groups, answering positively to the above question.  
The known results on the computation of quantum symmetry groups of graphs are summarized in \cite{art2}, where the description of the quantum symmetry group of vertex-transitive graphs of small order (up to $11$) is given (with an exception for the Petersen graph, whose quantum automorphism group remains mysterious).

\medskip

The present paper is a contribution to the study of quantum automorphism groups of finite graphs: we study the quantum automorphism group of a lexicographic product of finite regular graphs, for which we generalize the results from \cite{art2}.
The description of the quantum automorphism group of some lexicographic product of finite graphs was, amongst other ingredients, a key step in \cite{art2} in the description of the quantum automorphism group of small graphs. Recall that if $X$, $Y$ are finite graphs,
their lexicographic product is, roughly speaking, obtained by putting a copy
of $X$ at each vertex of $Y$ (see Section 3 for details). There is, in general, a group embedding
$${\rm Aut}(X)\wr{\rm Aut}(Y) \subset {\rm Aut}(X \circ Y) \quad (*)$$ where the group on the left is the wreath product of 
${\rm Aut}(X)$ by ${\rm Aut}(Y)$. A quantum analogue of the above embedding is given in \cite{art2}, using the free wreath product from \cite{art1}
 and a sufficient spectral condition was given to ensure that the quantum analogue of the embedding is an isomorphism.
However, there exist (vertex-transitive) graphs of order $\geq 12$ that do not satisfy the spectral assumption, and for which the embedding $(*)$ is an isomorphism (see Example \ref{Ex1}), hence the results in \cite{art2} are not sufficient to fully understand quantum symmetry groups of lexicographic products.

A necessary and sufficient condition on the graphs $X$, $Y$
in order that the embedding $(*)$ be an isomorphism was given by Sabidussi in \cite{art9} (see Section 4). The conditions look slightly technical at first sight, but are very easy to check in practice.
In this paper we provide a quantum generalization of Sabidussi's result: we show that for a pair of regular graphs
$X$, $Y$, the quantum analogue of the embedding $(*)$ is an isomorphism if and only if the graphs satisfy Sabidussi's conditions: see Theorem 3.5. Our result covers many graphs that do not satisfy the spectral conditions from \cite{art2}.

As a final comment, we wish to point out that our result, which expresses certain quantum automorphism groups of finite graphs as free wreath products, will be useful to study the representation theory and operator algebraic properties of these quantum groups, thanks to general results on quantum groups obtained as free wreath product     recently proved  in \cite{le, leta, wahl}.

\medskip

 The paper is organized as follows. Section 2 and 3 are preliminary sections: we recall some basic facts  about compact
quantum groups, quantum permutation groups, free wreath products and quantum automorphism groups of finite graphs. 
Section 4 is devoted to quantum automorphism groups of lexicographic product of finite graphs: we state our main result (Theorem 3.5) and prove it  taking for granted a technical result that we call the key lemma. We also examine some examples.
 The final Section 5 is devoted to the proof of the  key lemma.

\section{Compact quantum groups and free wreath product}

We first recall some basic facts concerning compact quantum groups. The books \cite{art14,art16} are convenient references for this topic, and all the definitions we omit can be found there. All algebras in this paper will be unital as well as all algebra morphisms, and $\otimes$ will denote the minimal tensor product of $C^*$-algebras as well as the algebraic tensor product; this should cause no confusion.

\begin{defin} A \emph{Woronowicz algebra} is a $C^*$-algebra $A$ endowed with a $*$-morphism $\Delta:A\to A\otimes A$ satisfying the coassociativity condition and the cancellation law
$$\overline{\Delta(A)(A\otimes 1)}=A\otimes A=\overline{\Delta(A)(1\otimes A)}$$
The morphism $\Delta$ is called the comultiplication of $A$.
\end{defin}

The category of Woronowicz algebras is defined in the obvious way. A commutative Woronowicz algebra is isomorphic with $C(G)$, the algebra of continuous functions on a compact group $G$, unique up to isomorphism, and the category of \emph{compact quantum groups} is defined to be the category dual to the category of Woronowicz algebras. Hence to any Woronowicz algebra $A$ corresponds a unique compact quantum group $G$ according to the heuristic notation $A=C(G)$.

Woronowicz's original definition for matrix compact quantum groups \cite{art7} is still the most useful to produce concrete examples, and we have the following fundamental result \cite{art13}.

\begin{theorem} Let $A$ be a $C^*$-algebra endowed with a $*$-morphism $\Delta: A\to A\otimes A$. Then $A$ is a Woronowicz algebra if and only if there exists a family of unitary matrices $(u_{\lambda})_{\lambda\in\Lambda}\in M_{d_{\lambda}}(A)$ satisfying the following three conditions:
\begin{enumerate}
\item The $*$-subalgebra $A_0$ generated by the entries $u_{ij}^{\lambda}$ of the matrices $(u^{\lambda})_{\lambda\in\Lambda}$ is dense in $A$.
\item For $\lambda\in\Lambda$ and $i,j\in\{1,\dots, d_{\lambda}\}$, one has $\Delta(u_{ij}^{\lambda})=\sum_{k=1}^{d_{\lambda}}{u_{ik}^{\lambda}\otimes u_{kj}^{\lambda}}$.
\item For $\lambda\in\Lambda$, the transpose matrix $(u^{\lambda})^t$ is invertible.
\end{enumerate}
\end{theorem}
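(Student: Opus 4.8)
The plan is to treat the two implications separately. The converse direction, building the Woronowicz algebra structure out of the matrix family, is essentially algebraic; the direct direction, manufacturing a complete family of corepresentations out of the abstract pair $(A,\Delta)$, is where all the analytic difficulty sits, and is the step I expect to be the main obstacle.

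For the implication $(\Leftarrow)$, assume the family $(u^{\lambda})_{\lambda\in\Lambda}$ satisfying (1)--(3) is given. First I would observe that $\Delta$ is automatically coassociative: by (2), both $(\Delta\otimes\id)\Delta$ and $(\id\otimes\Delta)\Delta$ send $u^{\lambda}_{ij}$ to $\sum_{k,l}u^{\lambda}_{ik}\otimes u^{\lambda}_{kl}\otimes u^{\lambda}_{lj}$, hence agree on the generators and, by the density condition (1), on all of $A$. The substantive point is the cancellation law, which by density reduces to the purely algebraic identities $\Delta(A_0)(A_0\otimes 1)=A_0\otimes A_0=\Delta(A_0)(1\otimes A_0)$ on the dense $*$-subalgebra $A_0$. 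Since each $u^{\lambda}$ is unitary one has $\overline{u^{\lambda}}=\bigl((u^{\lambda})^*\bigr)^t$ (the matrix with entries $(u^{\lambda}_{ij})^*$), whence $(u^{\lambda})^t\,\overline{u^{\lambda}}=\Id$; a short computation using only this unitarity collapses $\sum_i\Delta(u^{\lambda}_{ij})\bigl((u^{\lambda}_{ik})^*b\otimes 1\bigr)$ to $b\otimes u^{\lambda}_{kj}$, which places $A_0\otimes u^{\lambda}_{kj}$ inside $\Delta(A_0)(A_0\otimes 1)$. To promote this from generators in the right leg to all of $A_0\otimes A_0$ I would endow $A_0$ with a Hopf $*$-algebra structure, with counit $\varepsilon(u^{\lambda}_{ij})=\delta_{ij}$ and antipode $S(u^{\lambda}_{ij})=(u^{\lambda}_{ji})^*$. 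Here condition (3) is indispensable: invertibility of $(u^{\lambda})^t$ is equivalent to invertibility of the conjugate matrix $\overline{u^{\lambda}}$, which is exactly what is needed to define $S$ consistently on the generators $(u^{\lambda}_{ij})^*$ and thereby extend it to an antihomomorphism on all of $A_0$. With a genuine antipode the canonical maps $a\otimes b\mapsto\Delta(a)(b\otimes 1)$ and $a\otimes b\mapsto\Delta(a)(1\otimes b)$ become bijective on $A_0\otimes A_0$, their inverses being written explicitly through $S$; this is precisely the algebraic cancellation law, and taking norm closures concludes this direction.

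For the implication $(\Rightarrow)$, I would follow Woronowicz's route. The first and hardest step is to extract from the cancellation law a Haar state $h$ on $A$: a bi-invariant state, obtained as a weak-$*$ limit point of the Ces\`aro averages of the convolution powers of an arbitrary state, the cancellation law being precisely what forces the limit to be translation-invariant. With $h$ in hand I would develop the corepresentation theory: a finite-dimensional corepresentation is a $v\in M_n(\C)\otimes A$ with $(\id\otimes\Delta)(v)=v_{12}v_{13}$, and averaging an inner product against $h$ shows that every corepresentation is unitarizable and splits as a direct sum of irreducibles. The Peter--Weyl theorem then identifies $A_0$ with the linear span of the matrix coefficients of the irreducible unitary corepresentations and shows it is a dense $*$-subalgebra. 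Finally I would let $\Lambda$ index a maximal family of pairwise inequivalent irreducible unitary corepresentations and take $(u^{\lambda})_{\lambda\in\Lambda}$ to be representatives: (1) is the density of $A_0$, (2) is the corepresentation identity read off in coordinates, and (3) holds because the conjugate $\overline{u^{\lambda}}$ is again a corepresentation, its invertibility encoding the contragredient, equivalently the antipode, on each irreducible block. As indicated, the construction of the Haar state together with the analytic input of Peter--Weyl theory is the crux; the passage to the matrix family afterward is routine bookkeeping on matrix coefficients.
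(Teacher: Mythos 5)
The paper gives no proof of this statement at all: it is quoted as a foundational result from Woronowicz's Les Houches notes \cite{art13}, so your attempt can only be judged on its own merits. Your outline of the direction $(\Rightarrow)$ (Haar state as a weak-$*$ limit of Ces\`aro averages of convolution powers, unitarizability and complete reducibility via averaging, Peter--Weyl density, invertibility of the conjugate corepresentation giving (3)) is the standard Woronowicz route and is fine as a sketch. The direction $(\Leftarrow)$, however, contains a genuine error and a genuine gap. The error: the identity $(u^{\lambda})^t\,\overline{u^{\lambda}}=\Id$ does \emph{not} follow from unitarity. Your ``whence'' silently uses $(AB)^t=B^tA^t$, which fails for matrices over a noncommutative algebra: the entries of $(u^{\lambda})^t\overline{u^{\lambda}}$ are $\sum_k u^{\lambda}_{ki}(u^{\lambda}_{kj})^*$, i.e.\ the unitarity sums $\sum_k (u^{\lambda}_{ki})^*u^{\lambda}_{kj}=\delta_{ij}$ with each product reversed, and there is no reason these coincide. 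Concretely, for the fundamental unitary $u=\begin{pmatrix}\alpha&-q\gamma^*\\ \gamma&\alpha^*\end{pmatrix}$ of $SU_q(2)$ (with $\alpha\gamma=q\gamma\alpha$) one gets $(u^t\overline{u})_{12}=-q\alpha\gamma+\gamma\alpha=(1-q^2)\gamma\alpha\neq 0$ for $q^2\neq 1$, although $u$ is unitary. This is not cosmetic: the same bogus transposition argument would give $\overline{u^{\lambda}}\,(u^{\lambda})^t=\Id$ as well, making condition (3) automatic, whereas the entire point of (3) is that it is an extra hypothesis. Unitarity can only be used with the correct order of factors, e.g.\ $\sum_i\bigl(b\,(u^{\lambda}_{ik})^*\otimes 1\bigr)\Delta(u^{\lambda}_{ij})=b\otimes u^{\lambda}_{kj}$, which uses $\sum_i (u^{\lambda}_{ik})^*u^{\lambda}_{il}=\delta_{kl}$.

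The gap, which is structurally more serious: you cannot ``endow $A_0$ with a Hopf $*$-algebra structure'' by prescribing $\varepsilon(u^{\lambda}_{ij})=\delta_{ij}$ and $S(u^{\lambda}_{ij})=(u^{\lambda}_{ji})^*$ on generators. $A_0$ is not a free $*$-algebra; it is the $*$-subalgebra of $A$ generated by the $u^{\lambda}_{ij}$, subject to whatever relations these elements satisfy inside $A$, over which you have no control, so well-definedness of $\varepsilon$ and $S$ is exactly what would need to be proved. In the standard theory, the existence of counit and antipode on $A_0$ is a \emph{consequence} of the theorem (it comes after the Haar state, Peter--Weyl, and the linear independence of coefficients of inequivalent irreducibles), so invoking it to prove the theorem is circular. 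Fortunately, no Hopf structure is needed for the cancellation laws: for any invertible $v\in M_d(A)$ satisfying (2) one has the exact identity $\sum_k\Delta(v_{ik})\bigl(1\otimes(v^{-1})_{kj}b\bigr)=v_{ij}\otimes b$. Apply this with $v=u^{\lambda}$ (unitary) and with $v=\overline{u^{\lambda}}$, which satisfies (2) because $\Delta$ is a $*$-map and is invertible \emph{precisely by hypothesis (3)}, since $\overline{u^{\lambda}}=((u^{\lambda})^t)^*$; products of coefficients are handled by peeling off one inverse at a time, as in $\sum_{j,l}\Delta(v_{ij}w_{kl})\bigl(1\otimes(w^{-1})_{lm}(v^{-1})_{jn}b\bigr)=v_{in}w_{km}\otimes b$. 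This yields $A_0\otimes A_0\subset\Delta(A_0)(1\otimes A_0)$; the other law follows by taking adjoints together with the symmetric computation displayed above, and density of $A_0$ concludes. That direct argument is how this implication is proved in the literature, and it makes transparent where (3) genuinely enters --- which your draft, because of the false identity, obscures.
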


In fact, the $*$-algebra $A_0$ in the theorem is canonically defined, and is what is now called a compact Hopf algebra: a Hopf $*$-algebra having all its finite-dimensional comodules equivalent to unitary ones (see \cite{art14,art16}). The counit and antipode of $A_0$, denoted, respectively, $\epsilon$ and $S$, are referred to as the counit and antipode of $A$. The Hopf $*$-algebra $A_0$ is called the \emph{algebra of representation functions} on the compact quantum group $G$ dual to $A$, with another heuristic notation $A_0=\mathcal{O}(G)$. 

Conversely, starting from a compact Hopf algebra, the universal $C^*$-completion yields a Woronowicz algebra in the above sense: see \cite{art14,art16}. In fact there are possibly several different $C^*$-norms on $A_0$, but we will not be concerned with this question.\smallskip

As usual, a (compact) quantum subgroup $H\subset G$ corresponds to a surjective Woronowicz algebra morphism $C(G)\to C(H)$, or to a  surjective Hopf $*$-algebra morphism $\mathcal{O}(G)\to\mathcal{O}(H)$.\smallskip                          

We refer the reader to \cite{art14,art16} for large classes of examples, including $q$-deformations of classical compact Lie groups. In the present paper, we will be interested in the following fundamental example, due to Wang \cite{art4}. First we need some terminology. A matrix $u\in M_n(A)$ is sais to be orthogonal if $u=\bar{u}$ and $uu^t=I_n=u^tu$. A matrix $u$ is said to be magic unitary if all its entries are projections, all distinct elements  of a same row or same column are orthogonal, and sums of rows and columns are equal to $1$. A magic unitary matrix is orthogonal. 

\begin{defin} The $C^*$-algebra $A_s(n)$ is defined to be the universal $C^*$-algebra generated by variables $(u_{ij})_{1\le i,j\le n}$, with relations making $u=(u_{ij})$ a magic unitary matrix.

The $C^*$-algebra $A_s(n)$ admits a Woronowicz algebra structure given by
$$\Delta(u_{ij}) =\sum_{k=1}^n{u_{ik}\otimes u_{kj}},\ \ \ \ \ \ \epsilon(u_{ij}) =\delta_{ij},\ \ \ \ \ \ S(u_{ij}) =u_{ji}$$

The associated compact quantum group is denoted by $S_n^+$, i.e.
$$A_s(n)=C(S_n^+)$$
\end{defin}

\begin{defin} A \emph{quantum permutation algebra} is a Woronowicz algebra quotient of $A_s(n)$ for some $n$. Equivalently, it is a Woronowicz algebra generated by the coefficients of a magic unitary matrix.
\end{defin}

We now come to quantum group actions, studied e.g. in \cite{art3}. They correspond to Woronowicz algebra coactions. Recall that if $B$ is a $C^*$-algebra, a (right) \emph{coaction} of Woronowicz algebra $A$ on $B$ is a $*$-homomorphism $\alpha: B\to B\otimes A$ satisfying the coassociativity condition and 
$$\overline{\alpha(B)(1\otimes A)}=B\otimes A$$

Wang has studied quantum groups actions on finite-dimensional $C^*$-algebras in \cite{art4}, where the following result is proved.

\begin{theorem} The Woronowicz algebra $A_s(n)$ is the universal Woronowicz algebra coacting on $\C^n$, and is infinite-dimensional if $n\ge 4$.
\end{theorem}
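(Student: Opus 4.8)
The plan is to identify coactions on $\C^n$ with magic unitary matrices, and then read off universality from the universal property that defines $A_s(n)$. Write $e_1,\dots,e_n$ for the canonical minimal projections of $\C^n=C(\{1,\dots,n\})$, so that $e_ie_j=\delta_{ij}e_i$, $e_i^*=e_i$ and $\sum_i e_i=1$. Any coaction $\alpha\colon\C^n\to\C^n\otimes A$ of a Woronowicz algebra $A$ is then encoded by elements $v_{ij}\in A$ via $\alpha(e_j)=\sum_i e_i\otimes v_{ij}$, and the whole statement reduces to showing that $v=(v_{ij})$ is a magic unitary and that $A_s(n)$, with its tautological magic unitary $u$, is universal for this data.

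First I would extract the relations on $v$. Since $\alpha$ is a unital $*$-homomorphism and each $e_j$ is a projection, applying $\alpha$ to $e_j^2=e_j=e_j^*$ and comparing the coefficients of the $e_i$ gives $v_{ij}^2=v_{ij}=v_{ij}^*$, so every entry is a projection; applying $\alpha$ to $\sum_j e_j=1$ gives $\sum_j v_{ij}=1$, so each row sums to $1$. At this point I would invoke the elementary $C^*$-fact that projections $p_1,\dots,p_m$ with $\sum_l p_l=1$ are automatically pairwise orthogonal (from $p_j+p_k\le 1$ one gets $p_kp_jp_k=0$, hence $p_jp_k=0$). Consequently orthogonality within each row, and later within each column, comes for free, so the only genuinely missing relation is that each column also sums to $1$.

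The heart of the matter is thus to prove $\sum_i v_{ij}=1$, equivalently that $\alpha$ preserves the uniform trace $\tau$ on $\C^n$. This is precisely the step where the full Woronowicz structure, and not merely the $*$-homomorphism property, is indispensable: coassociativity of $\alpha$ forces $v$ to be a corepresentation, $\Delta(v_{ij})=\sum_k v_{ik}\otimes v_{kj}$, with counit $\epsilon(v_{ij})=\delta_{ij}$, and this counit condition is exactly what excludes the degenerate row-stochastic matrices that would otherwise violate the column relation. From the corepresentation property $v$ is invertible with $v^{-1}=(S(v_{ij}))$, and invariance of the Haar state together with the antipode then pins the column sums down to $1$. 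This trace-invariance is the key input due to Wang \cite{art4}, and it is the part I expect to be the main obstacle; granting it, $v$ is a magic unitary.

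Universality is then formal: a magic unitary $v$ in an arbitrary such $A$ satisfies the defining relations of $A_s(n)$, so by the universal property there is a unique $*$-homomorphism $A_s(n)\to A$ with $u_{ij}\mapsto v_{ij}$; it is a Woronowicz morphism, since it respects the formulas for $\Delta,\epsilon,S$ on the generators, and it intertwines the coactions, with uniqueness holding because the $v_{ij}$ generate the relevant subalgebra. Conversely the tautological $u$ does define a coaction of $A_s(n)$ on $\C^n$, the cancellation law being automatic once rows sum to $1$. Finally, for infinite-dimensionality when $n\ge 4$, I would fix the points $5,\dots,n$ to obtain a surjection $A_s(n)\twoheadrightarrow A_s(4)$, and then exhibit an infinite-dimensional quotient of $A_s(4)$: taking two free projections $p,q$ that generate $C^*(\Z_2*\Z_2)$, the block-diagonal matrix with diagonal blocks $\left(\begin{smallmatrix}p&1-p\\1-p&p\end{smallmatrix}\right)$ and $\left(\begin{smallmatrix}q&1-q\\1-q&q\end{smallmatrix}\right)$ is a $4\times 4$ magic unitary, hence induces a surjection $A_s(4)\twoheadrightarrow C^*(\Z_2*\Z_2)$. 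Since $\Z_2*\Z_2$ is infinite, its group $C^*$-algebra is infinite-dimensional, forcing $A_s(4)$, and therefore $A_s(n)$, to be infinite-dimensional.
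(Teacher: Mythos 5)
Your proposal is incomplete precisely at the point that constitutes the actual content of the theorem. Note first that the paper itself does not prove this statement: it quotes it from Wang \cite{art4}, so your argument has to stand on its own, and it does not, for two related reasons. First, you assert that coassociativity of $\alpha$ forces the counit condition $\epsilon(v_{ij})=\delta_{ij}$. It does not: coassociativity alone gives only $\Delta(v_{ij})=\sum_k v_{ik}\otimes v_{kj}$, and there are coassociative unital $*$-homomorphisms $\C^n\to\C^n\otimes A$ violating the counit condition --- for instance $\alpha(f)=f(1)\,1\otimes 1$, whose matrix $v_{ij}=\delta_{j1}$ consists of projections with rows summing to $1$, yet $\epsilon(v_{ij})=\delta_{j1}\neq\delta_{ij}$ and the columns do not sum to $1$. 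What rules out such degenerate examples is the density axiom $\overline{\alpha(B)(1\otimes A)}=B\otimes A$, and passing from that axiom to counitality and to an algebraic corepresentation of the dense Hopf $*$-algebra (a Podles-type argument) is a genuine step you have not made.

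Second, and more seriously, the column-sum relation $\sum_i v_{ij}=1$ --- equivalently unitarity of $v$, equivalently invariance of the uniform trace --- is exactly Wang's theorem, and you explicitly ``grant'' it rather than prove it. The sketch you offer (Haar state plus antipode ``pins the column sums down to $1$'') is not an argument, and as stated it risks circularity: applying $S$ to the row relation $\sum_j v_{ij}=1$ yields the column relation only if one already knows $S(v_{ij})=v_{ji}$, which is one of the relations of $A_s(n)$ being established, not a hypothesis. What the corepresentation property actually gives is $\sum_k S(v_{ik})v_{kj}=\delta_{ij}1$, i.e.\ invertibility of $v$ with inverse $(S(v_{ij}))_{ij}$; combined with row orthogonality this shows that $v^*v=\mathrm{diag}\bigl(\textstyle\sum_i v_{i1},\dots,\sum_i v_{in}\bigr)$ is invertible, but concluding that this diagonal equals the identity requires substantial further work with the Haar state, and that work is the theorem. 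By contrast, everything else in your proposal is fine: the extraction of the projection and row relations, the formal universality argument, and the infinite-dimensionality via $A_s(n)\twoheadrightarrow A_s(4)\twoheadrightarrow C^*(\Z_2*\Z_2)$ are all correct (the last being Wang's own argument). So the proposal reduces the statement correctly but leaves its core unproved.
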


The coaction is constructed in the following manner. Let $e_1,\dots,e_n$ be the canonical basis of $\C^n$. Then the coaction $\alpha: \C^n\to\C^n\otimes A_s(n)$ is defined by the formula
$$\alpha(e_i)=\sum_{j=1}^n{e_j\otimes u_{ji}}$$

We refer the reader to \cite{art4} for the precise meaning of universality in the theorem, but roughly speaking this means that $S_n^+$ is the largest compact quantum group acting on $n$ points, and deserves to be called the \emph{quantum permutation group on n points}.\smallskip

Equivalently, Wang's theorem states that any Woronowicz algebra coacting faithfully on $\C^n$ is a quotient of the Woronowicz algebra $A_s(n)$, and shows that quantum groups acting on $n$ points correspond to Woronowicz algebra quotient of $A_s(n)$, and hence to quantum permutation algebras. In particular, there is a surjective Woronowicz algebra morphism $A_s(n)\to C(S_n)$, yielding a quantum group embedding $S_n\subset S_n^+$. More directly, the existence of the surjective morphism $A_s(n)\to C(S_n)$ follows from the fact that $C(S_n)$ is the universal commutative $C^*$-algebra generated by the entries of a magic unitary matrix. See \cite{art4} for details.\smallskip

We now present the construction of the free wreath product by the quantum permutation group, from \cite{art1}. First we recall the definition of the wreath product in the classical case.

\begin{defin} Let $G$ and $H$ be two finite groups and $\Omega$ a set with $H$ acting on it.  Let $K$ be the direct product $$K:=\prod_{\omega\in\Omega}{G_{\omega}}$$ of copies of $G_{\omega}:= G$ indexed by $\Omega$. Then the action of $H$ on $\Omega$ extends in a natural way to an action of $H$ on the group $K$ by
$$h.(g_{\omega})=(g_{h^{-1}.\omega}),\ \ \text{for}\ h\in H \text{ and } (g_\omega)\in \prod_{\omega\in\Omega}{G_{\omega}}$$
Then the \emph{wreath product} of $G$ by $H$, denoted by $G\wr_{\omega} H$, is the semidirect product of $K$ by $H$. 
The normal subgroup $K$ of $G\wr_{\omega} H$ is called the base of the wreath product.
\end{defin}

\begin{notation}If $G$ is a finite group and $H$ a subgroup of the permutation group $S_n$, then we simply denote by $G\wr H$ the wreath product $G\wr_{\omega} H$ with $\omega=\{1,\dots,n\}$.
\end{notation}


\begin{defin} Let $n\in\N^*$ and let $A$ be a Woronowicz algebra. The \emph{free wreath product} of $A$ by the quantum permutation algebra $A_s(n)$ is the quotient of the $C^*$-algebra $ A^{*n}*A_s(n)$ by the two-sided ideal generated by the elements
$$\nu_k(a)u_{ki}-u_{ki}\nu_k(a),\ \ \ 1\le i,\ k\le n,\ \ a\in A,$$
where $\nu_i:A\to  A^{*n}$, $1\le i\le n$ are the canonical $*$-homomorphisms.

The corresponding $C^*$-algebra is denoted by $ A*_wA_s(n)$.
\end{defin}


\begin{theorem} The free wreath product $A*_wA_s(n)$ admits a Woronowicz algebra structure, with for $a\in A$ and let $i,j\in\{1,\dots,n\}$,
$$\Delta(u_{ij})=\sum_{k=1}^{n}{u_{ik}\otimes u_{kj}}$$
$$\Delta(\nu_i(a))=\sum_{k=1}^n{(\nu_i\otimes\nu_k)(\Delta_{A}(a)).(u_{ik}\otimes 1)}$$
\end{theorem}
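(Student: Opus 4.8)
Write $B := A *_w A_s(n)$, and keep the notation $\nu_i : A \to B$ and $u_{ij}\in B$ for the images of the canonical maps and the magic unitary generators. The plan is to first produce $\Delta$ as a genuine unital $*$-homomorphism $B \to B\otimes B$, and then to certify that $(B,\Delta)$ is a Woronowicz algebra by exhibiting a generating family of corepresentations and invoking Theorem 2.3. The latter route is convenient because its three conditions are not merely necessary but sufficient, so coassociativity and the cancellation law come for free once the conditions are verified.

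To construct $\Delta$ I would build it from the universal properties at hand and then descend to the quotient. First, the matrix $V=(V_{ij})$ with $V_{ij}=\sum_k u_{ik}\otimes u_{kj}$ is magic unitary in $B\otimes B$: self-adjointness is clear, while $V_{ij}^2=V_{ij}$, the orthogonality of distinct entries in a row or column, and $\sum_j V_{ij}=\sum_k u_{ik}\otimes 1=1$ all follow by applying the row and column orthogonality relations of $u$ leg by leg. By the universal property of $A_s(n)$ this yields a $*$-homomorphism $\rho:A_s(n)\to B\otimes B$, $u_{ij}\mapsto V_{ij}$. Next, for each $i$ set $\theta_{ik}:=(\nu_i\otimes\nu_k)\circ\Delta_A:A\to B\otimes B$ and define
$$\Psi_i(a)=\sum_{k=1}^n \theta_{ik}(a)\,(u_{ik}\otimes 1).$$
The crucial observation is that each projection $u_{ik}\otimes 1$ commutes with every $\theta_{il}(a)$: indeed $\theta_{il}(a)$ lies in the $C^*$-subalgebra generated by $\nu_i(A)\otimes 1$ and $1\otimes B$, and $u_{ik}\otimes 1$ commutes with both, the first commutation coming precisely from the defining relation $\nu_i(c)u_{ik}=u_{ik}\nu_i(c)$ of the free wreath product. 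Using this, together with $(u_{ik}\otimes 1)(u_{il}\otimes 1)=\delta_{kl}(u_{ik}\otimes 1)$ and the multiplicativity of each $\theta_{ik}$, one checks that $\Psi_i$ is a unital $*$-homomorphism; on $A_0$ this is a one-line Sweedler computation, and it extends to $A$ by density. The maps $\Psi_i$ then assemble, via the universal property of the free product, into $\Psi:A^{*n}\to B\otimes B$, and together with $\rho$ this gives a $*$-homomorphism $\widetilde\Delta:A^{*n}*A_s(n)\to B\otimes B$ sending $u_{ij}$ and $\nu_i(a)$ to the two stated expressions.

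It then remains to descend $\widetilde\Delta$ to $B$, that is, to check that it annihilates the ideal generated by the elements $\nu_k(a)u_{ki}-u_{ki}\nu_k(a)$; equivalently, that $\Psi_k(a)$ commutes with $\rho(u_{ki})$ in $B\otimes B$. Expanding both products and repeatedly using $u_{km}u_{kl}=\delta_{ml}u_{km}$ together with the commutation relations in each leg, both $\Psi_k(a)\rho(u_{ki})$ and $\rho(u_{ki})\Psi_k(a)$ collapse to the common value $\sum_{m}\sum_{(a)}\nu_k(a_{(1)})u_{km}\otimes\nu_m(a_{(2)})u_{mi}$, so they agree. This is the heart of the argument and the step I expect to be the main obstacle: the two relation families defining $B$ — the magic unitary relations and the free-wreath commutation relations — are exactly what make $\widetilde\Delta$ factor through $B$, and one must take care to separate the algebraic level ($A_0$, where Sweedler notation is available) from the $C^*$-level (where one argues by density and continuity). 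This yields the desired $*$-homomorphism $\Delta:B\to B\otimes B$.

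Finally, to apply Theorem 2.3 I would exhibit the generating corepresentations. The magic unitary $u$ is orthogonal, hence unitary with invertible transpose, and satisfies condition (2) by the very definition of $\Delta$. For every finite-dimensional unitary corepresentation $w=(w_{st})$ of $A$ — such exist and their coefficients span $A_0$, since $A_0$ is a compact Hopf algebra — I would form the matrix $W$ indexed by pairs, $W_{(i,s),(j,t)}=\nu_i(w_{st})u_{ij}$. A direct computation, again using orthogonality and commutation, shows $WW^*=I=W^*W$, so $W$ is unitary (whence $W^t$ is invertible), and that $\Delta(W_{(i,s),(j,t)})=\sum_{(k,r)}W_{(i,s),(k,r)}\otimes W_{(k,r),(j,t)}$, which is condition (2). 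Since $\nu_i(w_{st})=\sum_j W_{(i,s),(j,t)}$, the coefficients of $u$ together with those of all the $W$'s generate a dense $*$-subalgebra of $B$, giving condition (1). Theorem 2.3 then delivers that $(B,\Delta)$ is a Woronowicz algebra with the stated comultiplication.
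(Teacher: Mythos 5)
The paper itself does not prove this statement --- it is recalled without proof from \cite{art1} (the theorem appears right after the definition of the free wreath product) --- so your proposal can only be judged on its own terms, and in fact it follows what is essentially the standard route. Nearly all of it is correct: the matrix $V=(V_{ij})$ is indeed magic unitary, so $\rho$ exists by universality of $A_s(n)$; each $\Psi_i$ is a unital $*$-homomorphism because $u_{ik}\otimes 1$ commutes with every $\theta_{il}(a)$ (the defining relations of the free wreath product enter exactly here); and your descent computation --- the identity $\Psi_k(a)\rho(u_{ki})=\rho(u_{ki})\Psi_k(a)$, proved by Sweedler calculus for $a\in A_0$ and extended to $A$ by density --- is correct: both sides do collapse to $\sum_m\sum_{(a)}\nu_k(a_{(1)})u_{km}\otimes\nu_m(a_{(2)})u_{mi}$. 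The matrices $W_{(i,s),(j,t)}=\nu_i(w_{st})u_{ij}$ are also the right family to feed into Woronowicz's characterization theorem (which is Theorem 2.2 of the paper, not 2.3), and your verifications of the density condition (1), of the corepresentation identity (2), and of $WW^*=I=W^*W$ all go through.

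There is, however, one genuine gap: the parenthetical claim ``so $W$ is unitary (whence $W^t$ is invertible)''. Over a noncommutative $C^*$-algebra, unitarity of a matrix does \emph{not} imply invertibility of its transpose --- this is precisely why condition (3) appears in the characterization theorem as a requirement separate from unitarity. A concrete counterexample: if $S$ is the unilateral shift ($S^*S=1\neq SS^*$), then
$$W_0=\begin{pmatrix} S & 1-SS^* \\ 0 & S^*\end{pmatrix}$$
is unitary, but $W_0^t$ is not invertible, since a right inverse would force $Sa=1$, impossible for a proper isometry. So condition (3) for your $W$ must be proved, not inferred; fortunately the needed ingredients are already in your argument. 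Since $A_0$ is a compact Hopf algebra, the conjugate $\bar w=(w_{st}^*)$ is a corepresentation equivalent to a unitary one, hence invertible, hence $w^t=(\bar w)^*$ is invertible over $A$. Writing $m=(w^t)^{-1}$, the matrix $M_{(j,t),(k,r)}:=\nu_j(m_{tr})u_{jk}$ is a two-sided inverse of $W^t$: the same moves you use for unitarity (commutation of the row $u_{j\cdot}$ with $\nu_j(A)$, row/column orthogonality of $u$, and $\sum_j u_{ji}=1$) give
$$(W^tM)_{(i,s),(k,r)}=\delta_{ik}\sum_j\nu_j\Big(\sum_t w_{ts}m_{tr}\Big)u_{ji}=\delta_{ik}\delta_{sr},\qquad (MW^t)_{(j,t),(l,v)}=\delta_{jl}\,\nu_j\Big(\sum_s m_{ts}w_{vs}\Big)=\delta_{jl}\delta_{tv}.$$
(Alternatively: $\bar W$ is built from $\bar w$ exactly as $W$ is built from $w$, and the scalar conjugation making $\bar w$ unitary makes $\bar W$ unitary, so $\bar W$, hence $W^t=(\bar W)^*$, is invertible.) With this step supplied, your proof is complete.
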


We can naturally extend this notion to the case $A*_w H$, when $H$ is any quantum permutation algebra. If $A$ and $B$ are quantum permutation algebras with respective generating magic unitary matrices $u$ and $v$ of size $p$ and $n$, then $A*_wB$ is also a quantum permutation algebra (quotient of $A_s(np)$) with generating magic unitary matrix given by
$$w=(w_{ia,jb})=(u_{ij}^{(a)}v_{ab})$$
where $u^{(a)}=(u_{ij}^{(a)})$ are copies of $u$ and by definition of the free wreath product we add the commuting relations
$$u_{ij}^{(a)}v_{ab}=v_{ab}u_{ij}^{(a)}$$

Hence the Woronowicz algebra structure is given by 
$$\Delta(u_{ij}^{(a)})=\sum_{(s,k)\in [1,p]\times[1,n]}         {u_{is}^{(a)}v_{ak}\otimes u_{sj}^{(k)}} ,\ \ \epsilon(u_{ij}^{(a)})=\delta_{ij},\ \ S(u_{ij}^{(a)})=  \sum_{b=1}^n{   u_{ji}^{(b)}v_{ba}  }              $$
$$\Delta(v_{ab})= \sum_{s=1}^n         {v_{as}\otimes v_{sb}},\ \ \ \epsilon(v_{ab})=\delta_{ab},\ \ \ S(v_{ab})=v_{ba}   $$

Using the properties of magic unitary matrices we obtain:

$$\Delta(w_{ia,jb})= \sum_{l=1}^n{ \sum_{k=1}^p{w_{ia,kl}\otimes w_{kl,jb}}},\ \ \  \epsilon(w_{ia,jb})=\delta_{ij}\delta_{ab},\ \ \ S(w_{ia,jb})=w_{jb,ia}$$

\section{Quantum automorphism group of finite graphs}

In this section we recall the definition of the quantum automorphism group of a finite graph $X$ using \cite{art11,art12}. We first introduce some notations.\bigskip

For a finite graph $X$ with $n$ vertices, it is convenient to also call $X$ the set of vertices of $X$. The complement graph of $X$ will be denoted by $X^c$. If $i$ and $j$ are two vertices of $X$ we use the notation $i\sim_X j$ when they are connected and $i\not\sim_X j$ when they are not (or simply $i\sim j$ when no confusion can arise). \smallskip

 If $i$ is a vertex of $X$, we denote by $\mathcal{V}_X(i)$ the set of neighbours of $i$ in $X$ and $\mathcal{W}_X(i):=(\mathcal{V}_X(i)\cup\{i\})^c=\mathcal{V}_{X^c}(i)$ (or $\mathcal{V}(i)$ and $\mathcal{W}(i)$). We also use the following notations for the cardinal: $\lambda(i)=|\mathcal{V}(i)|$, $\alpha(i)=|\mathcal{W}(i)|$. We say that $X$ is regular when $\lambda(i)$ does not depend on $i\in X$, in this case the notations $\lambda(X)$ and $\alpha(X)$ make sense.\smallskip

\begin{defin} The \emph{adjacency matrix} of $X$ is the matrix $d_X=(d_{ij})_{1\le i,j\le n}\in M_n(0,1)$ given by $d_{ij}=1$ if $i,j$ are connected by an edge, and $d_{ij}=0$ if not. The value of $d_{ij}$ will also be called the \emph{nature} of the couple $(i,j)$ in $X$.
\end{defin}

The classical automorphism group of $X$ will be denoted by $\mathrm{Aut}(X)$ (this is a subgroup of $S_n$) and we have the following way to characterize its elements.

\begin{prop}\label{00} Identifying $\sigma\in S_n$ to the associated permutation matrix $P_{\sigma}\in M_n(\{0,1\})$, we have:
$$\sigma\in\mathrm{Aut}(X)\Longleftrightarrow d_XP_{\sigma}=P_{\sigma}d_X$$
\end{prop}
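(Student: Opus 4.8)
The plan is to turn the matrix identity $d_X P_\sigma = P_\sigma d_X$ into a statement about individual entries and then recognize it as the defining property of a graph automorphism, namely that $\sigma$ preserves the nature of every couple of vertices: $d_{\sigma(i)\sigma(j)} = d_{ij}$ for all $i,j$, which is the same as saying $i\sim_X j \Longleftrightarrow \sigma(i)\sim_X\sigma(j)$. Both implications of the proposition will then fall out simultaneously from a single entrywise equivalence, so there is really only one computation to carry out.

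First I would fix the convention for the permutation matrix, say $(P_\sigma)_{ij}=\delta_{i,\sigma(j)}$, so that $P_\sigma e_j = e_{\sigma(j)}$. With this choice, expanding the two matrix products entry by entry gives, for all $i,j$,
$$(d_X P_\sigma)_{ij} = \sum_k d_{ik}(P_\sigma)_{kj} = d_{i,\sigma(j)}, \qquad (P_\sigma d_X)_{ij} = \sum_k (P_\sigma)_{ik} d_{kj} = d_{\sigma^{-1}(i),j}.$$
Hence the commutation relation $d_X P_\sigma = P_\sigma d_X$ holds if and only if $d_{i,\sigma(j)} = d_{\sigma^{-1}(i),j}$ for every pair $(i,j)$.

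Finally I would relabel the index $i \mapsto \sigma(i)$ in this last equality, rewriting it in the symmetric form $d_{\sigma(i),\sigma(j)} = d_{ij}$ valid for all $i,j$. Since $d_{ij}$ records exactly the nature of the couple $(i,j)$ in $X$, this says precisely that $\sigma$ sends edges to edges and non-edges to non-edges, i.e. $\sigma \in \mathrm{Aut}(X)$, and the chain of equivalences proves both directions at once. The only point requiring any care is the bookkeeping of $\sigma$ versus $\sigma^{-1}$: one must keep the matrix convention fixed throughout and perform the substitution correctly so that the commutation relation lands on the symmetric condition $d_{\sigma(i)\sigma(j)}=d_{ij}$; with the convention pinned down from the start this is immediate, and there is no genuine obstacle beyond this routine index manipulation.
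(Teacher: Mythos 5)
Your proof is correct. The paper states Proposition \ref{00} without proof, treating it as the classical folklore characterization of graph automorphisms, and your entrywise verification is exactly the standard argument one would supply: the computation $(d_X P_\sigma)_{ij} = d_{i,\sigma(j)}$ and $(P_\sigma d_X)_{ij} = d_{\sigma^{-1}(i),j}$ is right under your stated convention, the relabeling $i \mapsto \sigma(i)$ is legitimate because $\sigma$ is a bijection of the index set, and the resulting condition $d_{\sigma(i)\sigma(j)} = d_{ij}$ for all $i,j$ is precisely the definition of $\sigma$ preserving the nature of every couple of vertices, i.e.\ $\sigma \in \mathrm{Aut}(X)$.
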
\smallskip

This characterization in the classical case leads to the following natural definition of the quantum automorphism group of a finite graph, see \cite{art11}.

\begin{defin} Associated to a finite graph $X$ is the quantum permutation algebra
$$A(X)=A_s(n)/\langle d_Xu=ud_X\rangle$$
where $n$ is the number of vertices of $X$.

\end{defin}\smallskip

The quantum automorphism group corresponding to $A(X)$ is the quantum automorphism group of $X$, denoted $\mathbb{G}_X$. In this way we have a commuting diagram of Woronowicz algebras:

\[\xymatrix{
     A_s(n)=C(S_n^+) \ar[rr]\ar[d]&  & A(X)= C(\mathbb{G}_X) \ar[d]\\
      C(S_n) \ar[rr]  & & C(\mathrm{Aut}(X))
    }\]
with the kernel of the right arrow being the commutator ideal of $A(X)$.  \smallskip

\begin{example} For the graph with $n$ vertices and no edges we have $A(X)=A_s(n)$, so $\mathbb{G}_X=S_n^+$. Moreover  we have $A(X^c)= A(X)$,
because $ud_X=d_Xu$ and $ud_{X^c}=d_{X^c}u$ are equivalent when $u$ is magic unitary.

If $X=C_n$ is the $n$-cycle graph one can show that for $n\ne 4$, $A(C_n)$ is commutative, thus $A(C_n)=C(\mathrm{Aut}(C_n))$ and therefore $\mathrm{Aut}(C_n)=\mathbb{G}_{C_n}=\mathcal{D}_n$,
where $\mathcal{D}_n$ is the $n$-dihedral group.
For more examples see \cite{art2}.
\end{example}

We are now interested in different ways to define $A(X)$ with other sets of relations. The following result is from \cite{art1}, we include a proof for the sake of completeness.

\begin{prop}\label{ppp1} Le $X$ be a graph with $n$ vertices and $u=(u_{ij})_{1\le i,j\le n}$ a magic unitary matrix. The following sets of relations are equivalent:
\begin{enumerate}{\setlength
 \item $d_Xu=ud_X$
\item For all $i,j\in[1,n]$, \ \ \ \ $$\sum_{k\in\mathcal{W}_X(i)}{u_{kj}}=\sum_{k\in\mathcal{W}_X(j)}{u_{ik}}$$
\item For all $i,j,k,l\in[1,n]$,
$$\left( i\sim j\ \text{and}\ k\not\sim l\right)\Longrightarrow u_{ik}u_{jl}=0=u_{ki}u_{lj}$$
}
\end{enumerate}
\end{prop}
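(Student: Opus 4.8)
The plan is to reduce all three conditions to a single entrywise identity and then connect them through the orthogonality relations of the magic unitary $u$. Writing out the $(a,b)$ entry of $d_Xu=ud_X$ gives
$$\sum_{c\sim a}u_{cb}=\sum_{c\sim b}u_{ac},\qquad(\star_{ab})$$
so condition (1) is literally the family $(\star_{ab})$ for all $a,b$. To obtain (1)$\Leftrightarrow$(2), I would use the partition of the vertex set into $\mathcal{V}_X(i)$, $\{i\}$ and $\mathcal{W}_X(i)$ together with the fact that rows and columns of $u$ sum to $1$: this rewrites $\sum_{k\in\mathcal{W}_X(i)}u_{kj}=1-u_{ij}-\sum_{c\sim i}u_{cj}$ and symmetrically for the other side, so condition (2) collapses to exactly $(\star_{ij})$. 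This first equivalence is therefore pure bookkeeping.

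For (1)$\Rightarrow$(3), I fix $i\sim j$ and $k\not\sim l$ and left-multiply the relation $(\star_{il})$ by $u_{ik}$. On the right-hand side every term is $u_{ik}u_{ic}$ with $c\sim l$, which by orthogonality within row $i$ equals $\delta_{kc}u_{ik}$; since $k\not\sim l$ we have $k\notin\mathcal{V}_X(l)$, so the whole right-hand side vanishes and hence $\sum_{c\sim i}u_{ik}u_{cl}=0$. Right-multiplying by $u_{jl}$ and using orthogonality within column $l$ (so $u_{cl}u_{jl}=\delta_{cj}u_{cl}$) isolates the single term $c=j$, which is legitimate precisely because $j\in\mathcal{V}_X(i)$. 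This yields $u_{ik}u_{jl}=0$. The second identity $u_{ki}u_{lj}=0$ I would obtain for free by applying the same argument to the transpose $u^t$, which is again magic unitary and still commutes with $d_X$ because $d_X$ is symmetric.

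For the converse (3)$\Rightarrow$(1), I would show the $(a,b)$ entry $D_{ab}:=(d_Xu-ud_X)_{ab}=\sum_{c\sim a}u_{cb}-\sum_{c\sim b}u_{ac}$ is zero by testing against each $u_{af}$. Left-multiplying by $u_{af}$ simplifies the second sum to $u_{af}d_{fb}$ via row orthogonality, leaving $u_{af}D_{ab}=\sum_{c\sim a}u_{af}u_{cb}-u_{af}d_{fb}$. If $f\not\sim b$, the first form of (3) (the couple $(a,c)$ has nature $1$, the couple $(f,b)$ has nature $0$) kills every term of the sum, and $d_{fb}=0$, so $u_{af}D_{ab}=0$. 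If $f\sim b$, the other form of (3) forces $u_{af}u_{cb}=0$ for every $c\not\sim a$, so completing the sum to all $c$ and using the column relation $\sum_c u_{cb}=1$ gives $\sum_{c\sim a}u_{af}u_{cb}=u_{af}$, which cancels $u_{af}d_{fb}=u_{af}$. Thus $u_{af}D_{ab}=0$ for every $f$, and summing over $f$ with the row relation $\sum_f u_{af}=1$ yields $D_{ab}=0$.

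The main obstacle I anticipate is the direction (3)$\Rightarrow$(1): unlike the other steps it is not a single orthogonality cancellation but requires feeding in \emph{both} forms of the vanishing condition (3) simultaneously and exploiting the completeness relations of $u$ in the case $f\sim b$. Choosing the right entry relation and the correct left/right multiplications in (1)$\Rightarrow$(3) is the other delicate point, but once the pattern ``left-multiply to annihilate one side, right-multiply to isolate one term'' is found it is routine.
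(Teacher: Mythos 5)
Your proof is correct, and it reorganizes the equivalences differently from the paper: the paper pivots everything through condition (2), proving $(1)\Leftrightarrow(2)$ as a direct translation (via $d_Xu=ud_X\Leftrightarrow d_{X^c}u=ud_{X^c}$; your row/column-sum bookkeeping is the same fact written out) and then $(3)\Rightarrow(2)$ and $(2)\Rightarrow(3)$, whereas you pivot through condition (1) itself. The underlying mechanics are identical — row/column orthogonality plus the completeness relations $\sum_k u_{ik}=1=\sum_k u_{ki}$, fed by both halves of (3) — but the individual arguments differ. The paper's $(2)\Rightarrow(3)$ is a sandwich, $u_{ik}u_{jl}=u_{ik}\bigl(\sum_{s\in\mathcal{W}_X(l)}u_{is}\bigr)u_{jl}=u_{ik}\bigl(\sum_{s\in\mathcal{W}_X(i)}u_{sl}\bigr)u_{jl}=0$, which is the mirror image (phrased with non-neighbours) of your left/right multiplication of $(\star_{il})$; its $(3)\Rightarrow(2)$ inserts $1=\sum_s u_{is}$, restricts one index set by (3), enlarges the other, and finishes with a column sum — the same pattern as your ``test $D_{ab}$ against $u_{af}$ and sum over $f$'' argument. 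Two points distinguish your write-up. First, the transpose trick (that $u^t$ is again magic unitary and commutes with the symmetric $d_X$) gets the second identity $u_{ki}u_{lj}=0$ for free, where the paper simply repeats the computation; that is a genuine economy. Second, the paper's choice of (2) as the pivot is not arbitrary: the $\mathcal{W}$-form of the relations is the one invoked repeatedly in the proof of the key lemma in Section 5, so establishing it as the central condition pays off downstream, while your route stays closer to the adjacency matrix. One small point you should make explicit: in your case analysis for $(3)\Rightarrow(1)$, the terms with $f=b$ (resp.\ $c=a$) vanish by column (resp.\ row) orthogonality rather than by (3), unless one reads $k\not\sim l$ as allowing $k=l$ — which is in fact the paper's own reading, since its proof of $(2)\Rightarrow(3)$ treats $k=l$ as a (trivial) case of condition (3).
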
\smallskip

\begin{proof}

\underline{$(1)\Leftrightarrow (2)$}: Using that $d_Xu=ud_X$ is equivalent to $d_{X^c}u=ud_{X^c}$, this is a direct translation.\smallskip

\underline{$(3)\Rightarrow (2)$}: Let $i,j\in[1,n]$,
\begin{eqnarray*} 
\sum_{k\in\mathcal{W}_X(i)}{u_{kj}}=  \sum_{k\in\mathcal{W}_X(i)}{   \left( \sum_{s=1}^n{u_{is}}\right)   u_{kj}}&=&\sum_{k\in\mathcal{W}_X(i)}{  \sum_{s\in\mathcal{W}_X(j)}{u_{is}}  u_{kj}}  \\
&=&\sum_{s\in\mathcal{W}_X(j)}{  u_{is}\left(\sum_{k\in\mathcal{W}_X(i)}{u_{kj}}\right) }  \\
&=&\sum_{s\in\mathcal{W}_X(j)}{  u_{is}\left(\sum_{k=1}^n{u_{kj}}\right) }  \\
&=&\sum_{s\in\mathcal{W}_X(j)}{  u_{is}}  \\
\end{eqnarray*}

\underline{$(2)\Rightarrow (3)$}: Let $i,j,k,l\in[1,n]$, such that $i\sim j\ \text{and}\ k\not\sim l$. If $k\ne l$ we have $k\in\mathcal{W}_X(l)$ and $j\notin\mathcal{W}_X(i)$ hence:
$$u_{ik}u_{jl}=u_{ik}\left(\sum_{s\in\mathcal{W}_X(l)}{  u_{is}  }\right)u_{jl}=  u_{ik}\left(\sum_{s\in\mathcal{W}_X(i)}{  u_{sl}  }\right)u_{jl}=0$$
$$\text{and}\ \ \ \ \ \ u_{ki}u_{lj}=u_{ki}\left(\sum_{s\in\mathcal{W}_X(l)}{  u_{si}  }\right)u_{lj}=  u_{ki}\left(\sum_{s\in\mathcal{W}_X(i)}{  u_{ls}  }\right)u_{lj}=0\ \ \ \ \ \ \ \ \ \ \  $$

If $k=l$, the announced identity is obvious.
\end{proof}

\section{Lexicographic product of graphs}

We now want to study the quantum automorphism group of the lexicographic product of two graphs by using those of these two graphs.

Let $X$ and $Y$ be two finite graphs. Their lexicographic product is obtained by putting a copy of $X$ at each vertex of $Y$.

\begin{defin} The \emph{lexicographic product} $X\circ Y$ has vertex set $X\times Y$ and edges are given by:
$$ (i,\alpha)\sim (j,\beta)\Leftrightarrow (\alpha=\beta \text{ and } i\sim j) \text{  or  } (\alpha\sim  \beta)$$
\end{defin}

The simplest example is $X\circ X_n$, where $X_n$ is the graph having $n$ vertices and no edges: it is the graph consisting of $n$ disjoint copies of $X$, also denoted by $nX$.

For the automorphism group of $X\circ Y$, we always have an inclusion 
\begin{eqnarray*}
\mathrm{Aut}(X)\wr\mathrm{Aut}(Y)&\hookrightarrow &\mathrm{Aut}(X\circ Y)\\
(\sigma^X_1,\sigma^X_2,\dots,\sigma^X_n,\sigma^Y) &\mapsto & (i,j)\mapsto (\sigma^X_j(i),\sigma^Y(j)) \\
\end{eqnarray*}
where $\sigma^X_k\in\mathrm{Aut}(X)$ for all $k$ and $\sigma^Y\in\mathrm{Aut}(Y)$.

Sabidussi in \cite{art9} characterizes the case of equality in the following theorem. First we need some notations. Following \cite{art9}, we define this two subsets of $X^2$:
$$S_X:=\{(x_0,x_1)\in X^2\mid \mathcal{V}(x_0)=\mathcal{V}(x_1)\text{ and }x_0\ne x_1\}$$
$$T_X:=S_{X^c}=\{(x_0,x_1)\in X^2\mid \mathcal{V}(x_0)\cup\{x_0\}=\mathcal{V}(x_1)\cup\{x_1\}\text{ and }x_0\ne x_1\}$$

\begin{theorem}\label{z}Let $X$, $Y$ be two finite graphs:
$$\mathrm{Aut}(X\circ Y)=\mathrm{Aut}(X)\wr\mathrm{Aut}(Y)\Leftrightarrow \left((S_Y\ne\emptyset\Rightarrow\text X\  {  connected })\text{ and }(T_Y\ne\emptyset\Rightarrow X^c\text{ connected})\right)$$

\end{theorem}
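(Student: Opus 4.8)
The plan is to prove Sabidussi's theorem by showing both directions via the explicit combinatorial structure of $\mathrm{Aut}(X \circ Y)$. The main idea is to understand exactly how an automorphism of $X \circ Y$ can fail to be of the wreath-product form: the obstruction is precisely the existence of "extra" symmetries coming from pairs of vertices in $Y$ that behave identically with respect to adjacency (the sets $S_Y$ and $T_Y$), combined with disconnectedness of $X$ (or $X^c$). Throughout, I would identify a vertex of $X \circ Y$ as a pair $(i,\alpha)$ with $i \in X$, $\alpha \in Y$, and I would think of the copies $X \times \{\alpha\}$ as the "fibers" over $Y$.

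First I would prove the easier $(\Leftarrow)$ direction. Assume the right-hand condition and take an arbitrary $\varphi \in \mathrm{Aut}(X \circ Y)$; the goal is to show $\varphi$ lies in the image of the embedding $(*)$. The key structural observation is that the fibers $X \times \{\alpha\}$ are characterized intrinsically: two vertices $(i,\alpha), (j,\alpha)$ in the same fiber have the property that they agree on all their neighbors outside the fiber, i.e.\ $\mathcal{V}_{X\circ Y}((i,\alpha)) \setminus (X \times \{\alpha\}) = \mathcal{V}_{X \circ Y}((j,\alpha)) \setminus (X \times \{\alpha\})$, this common set being $X \times \mathcal{V}_Y(\alpha)$. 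I would argue that under the hypotheses, $\varphi$ must permute the fibers as blocks: if $\varphi$ did not send a whole fiber to a whole fiber, one could extract from this failure either a pair in $S_Y$ together with a disconnection of $X$, or a pair in $T_Y$ together with a disconnection of $X^c$, contradicting the assumption. Once $\varphi$ preserves the fiber partition, it induces a permutation $\sigma^Y$ of $Y$, which must be an automorphism of $Y$ (adjacency between fibers corresponds to adjacency in $Y$), and within each fiber $\varphi$ restricts to an automorphism $\sigma^X_\alpha$ of $X$; this exhibits $\varphi$ in wreath-product form.

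For the harder $(\Rightarrow)$ direction I would argue contrapositively: if the right-hand condition fails, I construct an explicit automorphism of $X \circ Y$ that does not preserve the fiber partition, hence lies outside $\mathrm{Aut}(X)\wr\mathrm{Aut}(Y)$. Suppose, say, $S_Y \neq \emptyset$ but $X$ is disconnected; pick $(\alpha_0,\alpha_1) \in S_Y$, so $\mathcal{V}_Y(\alpha_0) = \mathcal{V}_Y(\alpha_1)$ with $\alpha_0 \neq \alpha_1$, and pick a connected component $C \subsetneq X$. I would define $\varphi$ to swap a translate of $C$ sitting over $\alpha_0$ with its counterpart over $\alpha_1$ while fixing everything else, and verify it is an automorphism: the point is that because $\alpha_0,\alpha_1$ have identical $Y$-neighborhoods and $\alpha_0 \not\sim_Y \alpha_1$ (which follows from $S_Y$, as a vertex is never its own neighbor), the adjacencies of vertices over $\alpha_0$ and $\alpha_1$ to vertices over other fibers are symmetric, and since $C$ is a full component there are no $X$-edges crossing between $C$ and its complement to worry about. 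The symmetric case $T_Y \neq \emptyset$ with $X^c$ disconnected follows by the duality $T_Y = S_{Y^c}$ and $X^c \circ \cdots$, using that $A(X^c)=A(X)$-style invariance of the construction under complementation.

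The main obstacle I anticipate is making the block-preservation argument in the $(\Leftarrow)$ direction fully rigorous: one must show that a failure of $\varphi$ to respect fibers forces the existence of the specific forbidden configuration $(S_Y \neq \emptyset$ with $X$ disconnected, or $T_Y \neq \emptyset$ with $X^c$ disconnected$)$, rather than some other kind of obstruction. Concretely, if $\varphi((i,\alpha))$ and $\varphi((j,\alpha))$ land in different fibers $\beta \neq \gamma$, one must analyze the shared-neighborhood condition to deduce that $\beta$ and $\gamma$ have equal (or equal-up-to-themselves) $Y$-neighborhoods, and simultaneously that the splitting of a fiber across two others forces a disconnection of $X$ or $X^c$. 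Carefully bookkeeping the two natures of adjacency (same-fiber edges governed by $X$, cross-fiber edges governed by $Y$) and separating the $S_Y$/$T_Y$ cases is where the technical care lies; the hypotheses of regularity, though not essential for the classical Sabidussi statement, streamline the counting of neighborhoods.
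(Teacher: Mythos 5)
First, a remark on the comparison itself: the paper does not prove Theorem \ref{z} at all --- it is quoted from Sabidussi \cite{art9} as a known classical result --- so your attempt has to be judged on its own merits rather than against a proof in the text. Your $(\Rightarrow)$ direction is correct and essentially complete: given $(\alpha_0,\alpha_1)\in S_Y$ and a connected component $C\subsetneq X$ of a disconnected $X$, the map swapping $C\times\{\alpha_0\}$ with $C\times\{\alpha_1\}$ and fixing everything else is an automorphism (cross-fiber adjacencies match because $\mathcal{V}_Y(\alpha_0)=\mathcal{V}_Y(\alpha_1)$ and $\alpha_0\not\sim_Y\alpha_1$, and no $X$-edge leaves $C$), and it splits a fiber, hence lies outside the wreath product; the $T_Y$ case follows by complementation using $(X\circ Y)^c=X^c\circ Y^c$ and $\mathrm{Aut}(Z)=\mathrm{Aut}(Z^c)$. (A small slip: regularity is irrelevant here; Theorem \ref{z} is stated for arbitrary finite graphs.)

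The genuine gap is in the $(\Leftarrow)$ direction, precisely at the step you yourself flag as the main obstacle and then never carry out. Your ``key structural observation'' --- that all vertices of a fiber $F_\alpha=X\times\{\alpha\}$ have the same neighbours outside $F_\alpha$ --- only says that fibers are modules of $X\circ Y$; it is not a characterization, since the product has many other modules (modules of $X$ inside a fiber, unions of fibers over an $S_Y$-class, and, exactly when Sabidussi's conditions fail, mixed sets obtained by exchanging components across fibers). An automorphism maps modules to modules, so this observation by itself puts no constraint on $\varphi$. What is actually needed is: (i) if $\varphi$ sends two vertices of $F_\alpha$ into distinct fibers $F_\beta,F_\gamma$, then comparing the neighbours of the two images among vertices \emph{outside} $\varphi(F_\alpha)$ --- using the cardinality fact that no fiber $F_\delta$ with $\delta\notin\{\beta,\gamma\}$ can be contained in $\varphi(F_\alpha)$ --- shows $\mathcal{V}_Y(\beta)$ and $\mathcal{V}_Y(\gamma)$ agree off $\{\beta,\gamma\}$, hence $(\beta,\gamma)\in S_Y\cup T_Y$; (ii) all pairs of fibers met by $\varphi(F_\alpha)$ must be of the \emph{same} type, because $(\beta,\gamma)\in S_Y$ and $(\gamma,\delta)\in T_Y$ force $(\beta,\delta)\notin S_Y\cup T_Y$ (one checks $\gamma\in\mathcal{V}_Y(\delta)\setminus\mathcal{V}_Y(\beta)$ and $\beta\sim_Y\delta$), contradicting (i), and in a $2$-coloured complete graph with both colours present two differently coloured edges always share a vertex; (iii) in the all-$S_Y$ case the pieces of $\varphi(F_\alpha)$ are pairwise non-adjacent, so $X\cong\varphi(F_\alpha)$ is disconnected while $S_Y\ne\emptyset$, and in the all-$T_Y$ case the pieces are completely joined, so $X^c$ is disconnected while $T_Y\ne\emptyset$ --- either way contradicting the hypothesis, so fibers are preserved and the wreath-product form follows. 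Step (ii) is not a technicality you can wave away: your claim that splitting a fiber across two others ``forces a disconnection of $X$ or $X^c$'' is false as stated, since $\varphi(F_\alpha)$ could a priori meet four or more fibers in a mixed adjacency pattern (e.g.\ a path on four vertices) whose graph and complement are both connected, yielding no disconnection at all; only after (ii) excludes mixed patterns does the disconnection argument go through. Without (i)--(iii), or an equivalent analysis, the hard direction remains unproved.
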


We would like to obtain an analogous result in the quantum case. Such a result will use the free wreath product.

Let $X$ and $Y$ be two finite graphs that have respectively $p$ and $n$ vertices. We denote by $u=(u_{ij})_{1\le i,j\le p}$ and $v=(v_{ab})_{1\le a,b\le n}$ the respective generating magic unitary matrices of $A(X)$ and $A(Y)$. The graph $X\circ Y$ has $np$ vertices labeled as follows: $$(i,a),\ \ 1\le i\le p,\ \ 1\le a\le n$$ We order them in the following way:
$$(1,1)<(2,1)<...<(p,1)<(1,2)<...<(p,2)<...<(1,n)<...<(p,n)$$

We denote by $Z=(Z_{ia,jb})_{1\le i,j\le p\atop 1\le a,b\le n}$ the generating magic unitary matrix of $A(X\circ Y)$, which satisfies

$$\Delta(Z_{ia,jb})=  \sum_{1\le k\le p\atop 1\le l\le n}{Z_{ia,kl}\otimes Z_{kl,jb}},\ \ \  \epsilon(Z_{ia,jb})=\delta_{ij}\delta_{ab},\ \ \ S(Z_{ia,jb})=Z_{jb,ia}$$

We have the following result from \cite{art2}.

\begin{prop} We have a surjective morphism of Woronowicz algebras
$$A(X\circ Y)\longrightarrow A(X)*_w A(Y)$$
given by $Z_{ia,jb}\mapsto u_{ij}^{(a)}v_{ab}$.
\end{prop}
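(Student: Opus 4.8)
The plan is to use the universal property of $A(X\circ Y)$. Since $A(X\circ Y)=A_s(np)/\langle d_{X\circ Y}Z=Zd_{X\circ Y}\rangle$, a Woronowicz algebra morphism out of it is the same thing as a magic unitary matrix in the target commuting with $d_{X\circ Y}$ and carrying the expected comultiplication. The excerpt already records that $w=(w_{ia,jb})=(u_{ij}^{(a)}v_{ab})$ is a magic unitary matrix generating $A(X)*_wA(Y)$, with $\Delta(w_{ia,jb})=\sum_{k,l}w_{ia,kl}\otimes w_{kl,jb}$, so it defines a Woronowicz algebra morphism $A_s(np)\to A(X)*_wA(Y)$. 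The only thing left to check is that this morphism factors through the graph relation, i.e.\ that $w$ satisfies $d_{X\circ Y}w=wd_{X\circ Y}$; this produces the desired morphism $Z_{ia,jb}\mapsto w_{ia,jb}$.

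To verify the graph relation I would not expand the matrix product directly but instead check the equivalent condition from Proposition \ref{ppp1}(3) applied to $w$. First I would unwind the adjacency of $X\circ Y$: for $a\ne b$ one has $(i,a)\sim(j,b)\Leftrightarrow a\sim_Y b$ independently of $i,j$, while for $a=b$ one has $(i,a)\sim(j,a)\Leftrightarrow i\sim_X j$. Given an edge $(i,a)\sim(j,b)$ and a non-edge $(k,c)\not\sim(l,d)$ in $X\circ Y$, the goal is to show that the two products $u_{ik}^{(a)}v_{ac}\,u_{jl}^{(b)}v_{bd}$ and $u_{ki}^{(c)}v_{ca}\,u_{lj}^{(d)}v_{db}$ vanish. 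I would organize this as a four-way case analysis according to whether $a=b$ or $a\ne b$, and whether $c=d$ or $c\ne d$, feeding in, as appropriate, relation (3) for the copy $u^{(a)}$ inside $A(X)$, relation (3) for $v$ inside $A(Y)$, the row/column orthogonality of the magic unitary $v$, and the free wreath commutation relations $u_{ij}^{(a)}v_{ab}=v_{ab}u_{ij}^{(a)}$.

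The main obstacle is that in the cases $a\ne b$ the factors $v_{ac}$ and $u_{jl}^{(b)}$ need not commute: the free wreath product only forces the $a$-th copy $u^{(a)}$ to commute with the entries $v_{a\bullet}$ of row $a$, not with copies of a different index, so one cannot simply slide all the $u$-factors out of the way and collapse the two $v$-factors. The trick I expect to use is to commute only the rightmost factor $v_{bd}$ (resp.\ $v_{db}$) leftward past the adjacent $u$-factor of matching superscript --- which is permitted --- so as to bring the two $v$-factors next to one another, and then kill the product either by column-orthogonality of $v$ (when $c=d$, $a\ne b$, using $v_{ac}v_{bc}=0$) or by relation (3) in $A(Y)$ (when $c\ne d$, $a\ne b$, using $a\sim_Y b$ and $c\not\sim_Y d$). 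The symmetric cases with $a=b$ reduce directly to relation (3) in the copy $u^{(a)}$ together with same-row orthogonality of $v$. I would run the argument twice, once for each of the two products above, since Proposition \ref{ppp1}(3) demands both the $u_{ik}u_{jl}$ and the $u_{ki}u_{lj}$ vanishings and the indices sit in transposed positions.

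Finally, surjectivity is immediate: the image is a $C^*$-subalgebra containing every $w_{ia,jb}=u_{ij}^{(a)}v_{ab}$, and the magic unitary relations recover all generators of $A(X)*_wA(Y)$ via $v_{ab}=\sum_{i}w_{ia,jb}$ (using $\sum_i u_{ij}^{(a)}=1$) and $u_{ij}^{(a)}=\sum_{b}w_{ia,jb}$ (using $\sum_b v_{ab}=1$). Hence the morphism is onto.
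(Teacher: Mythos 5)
Your proposal is correct, but note that the paper itself offers no proof to compare against: this proposition is simply imported from the Banica--Bichon paper (reference [4], \texttt{art2}), so your argument is a genuine reconstruction. Your verification is sound: the universal-property framing is right, the four-way case analysis does close up (in each case the offending product dies either by relation (3) of Proposition \ref{ppp1} applied inside a single copy $u^{(a)}$ or inside $A(Y)$, or by row/column orthogonality of $v$, after sliding the rightmost $v$-factor past the $u$-factor of matching superscript --- the only commutation the free wreath relations permit, as you correctly observe), and the surjectivity argument via $\sum_i u_{ij}^{(a)}=1$ and $\sum_b v_{ab}=1$ is exactly right. One small slip in bookkeeping: in the second product $u_{ki}^{(c)}v_{ca}u_{lj}^{(d)}v_{db}$ with $c=d$ and $a\ne b$, what kills it is \emph{row} orthogonality $v_{ca}v_{cb}=0$, not column orthogonality; the cases are symmetric but the roles of rows and columns swap between the two products. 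You could also streamline the whole commutation check and avoid the case analysis entirely: writing $d_{X\circ Y}=d_X\otimes I_n+J_p\otimes d_Y$ (with $J_p$ the all-ones matrix, in the vertex ordering of the paper), one computes
$$\big(w(d_X\otimes I_n)\big)_{ia,jb}=(u^{(a)}d_X)_{ij}\,v_{ab},\qquad \big((d_X\otimes I_n)w\big)_{ia,jb}=(d_Xu^{(a)})_{ij}\,v_{ab},$$
$$\big(w(J_p\otimes d_Y)\big)_{ia,jb}=(vd_Y)_{ab},\qquad \big((J_p\otimes d_Y)w\big)_{ia,jb}=(d_Yv)_{ab},$$
using $\sum_k u_{ik}^{(a)}=1=\sum_k u_{kj}^{(c)}$ and the commutation relations; then $wd_{X\circ Y}=d_{X\circ Y}w$ follows at once from $u^{(a)}d_X=d_Xu^{(a)}$ and $vd_Y=d_Yv$. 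This matrix-decomposition route is the standard one and is likely how the cited reference proceeds, but your case-by-case version is complete and equally valid.
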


When $Y=X_n$ is the $n$ vertices graph with no edges and $X$ is a connected graph, we have (see \cite{art6,art1})

$$A(X\circ X_n)=A(nX)\simeq A(X)*_w A_s(n)$$\smallskip

Moreover, the following result is shown in \cite{art2}.

\begin{theorem}\label{BB} Let $X$, $Y$ be two finite regular graphs with $X$ connected. If the spectra $\{\lambda_i\}$ of $d_X$ and $\{\mu_j\}$ of $d_Y$ satisfy the condition 
$$\{\lambda_1-\lambda_i\mid i\ne 1\}\cap\{-n\mu_j\}=\emptyset$$
where $n$ and $\lambda_1$ are the order and valence of $X$, then $A(X\circ Y)\simeq A(X)*_w A(Y)$.
\end{theorem}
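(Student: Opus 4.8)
The plan is to build a two-sided inverse to the surjection $\Phi\colon A(X\circ Y)\to A(X)*_wA(Y)$, $Z_{ia,jb}\mapsto u_{ij}^{(a)}v_{ab}=w_{ia,jb}$, so that $\Phi$ becomes an isomorphism. The first thing I would record is the adjacency matrix of the product: in the chosen ordering, writing $\C^{np}=\C^n\otimes\C^p$ with the $Y$-coordinate outer and the $X$-coordinate inner, one has $d_{X\circ Y}=I_n\otimes d_X+d_Y\otimes J_p$, where $J_p$ is the all-ones $p\times p$ matrix. Since $X$ is regular of valence $\lambda_1$ and connected, $\mathbf 1$ is a \emph{simple} eigenvector of $d_X$ for $\lambda_1$, the decomposition $\C^p=\C\mathbf 1\oplus\mathbf 1^\perp$ is $d_X$-invariant, and $J_p$ acts by $p$ on $\C\mathbf 1$ and by $0$ on $\mathbf 1^\perp$. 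Hence $d_{X\circ Y}$ restricts to $I_n\otimes d_X|_{\mathbf 1^\perp}$ on $\C^n\otimes\mathbf 1^\perp$, with eigenvalues $\{\lambda_i:i\ne 1\}$, and to $\lambda_1 I_n+p\,d_Y$ on $\C^n\otimes\C\mathbf 1$, with eigenvalues $\{\lambda_1+p\mu_j\}$. In the notation of the present section, where $X$ has $p$ vertices, the spectral hypothesis reads $\lambda_1-\lambda_i\ne -p\mu_j$ for all $i\ne 1$ and all $j$, i.e.\ it says exactly that these two families of eigenvalues are disjoint.

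The conceptual heart of the argument—and the only place the spectral hypothesis is used—is the following: disjointness means that $\C^n\otimes\C\mathbf 1$ is a \emph{full} spectral subspace of $d_{X\circ Y}$, so the orthogonal projection $Q=I_n\otimes\frac1p J_p$ onto it is a polynomial in $d_{X\circ Y}$. Since $d_{X\circ Y}$ commutes with the magic unitary $Z$ by definition of $A(X\circ Y)$, so does $Q$. As $Q_{ia,jb}=\delta_{ab}\frac1p$, the relation $QZ=ZQ$ unwinds to $\sum_k Z_{ka,jb}=\sum_k Z_{ia,kb}$ for all $i,j$; thus $\sum_k Z_{ka,jb}$ is independent of $j$, $\sum_k Z_{ia,kb}$ is independent of $i$, and both equal a common element I call $V_{ab}$. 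I would then set $V_{ab}:=\sum_k Z_{ka,jb}=\sum_k Z_{ia,kb}$ and $U_{ij}^{(a)}:=\sum_b Z_{ia,jb}$. A short verification with the magic-unitary axioms shows that $V=(V_{ab})$ and each $U^{(a)}=(U_{ij}^{(a)})$ are magic unitaries (the column relation $\sum_i U_{ij}^{(a)}=1$ uses $\sum_i Z_{ia,jb}=V_{ab}$ together with $\sum_b V_{ab}=1$). I expect this extraction of the block structure from $QZ=ZQ$ to be the main obstacle; everything afterwards is bookkeeping with magic unitaries and the single relation $d_{X\circ Y}Z=Zd_{X\circ Y}$.

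Next I would show these matrices satisfy the graph relations, so that they define morphisms out of $A(X)$ and $A(Y)$. Expanding $d_{X\circ Y}Z=Zd_{X\circ Y}$ with the tensor decomposition gives, for all $i,j,a,b$, the identity $\sum_k (d_X)_{ik}Z_{ka,jb}+(d_YV)_{ab}=\sum_k (d_X)_{kj}Z_{ia,kb}+(Vd_Y)_{ab}$. Summing this over $i$ and using $X$-regularity ($\sum_i (d_X)_{ik}=\lambda_1=\sum_k (d_X)_{kj}$) collapses the $Z$-terms to $\lambda_1 V_{ab}$ on each side and yields $d_YV=Vd_Y$; feeding this back leaves $\sum_k (d_X)_{ik}Z_{ka,jb}=\sum_k (d_X)_{kj}Z_{ia,kb}$, and summing \emph{that} over $b$ gives $d_XU^{(a)}=U^{(a)}d_X$ for every $a$. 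By the universal presentations $A(Y)=A_s(n)/\langle d_Yv=vd_Y\rangle$ and $A(X)=A_s(p)/\langle d_Xu=ud_X\rangle$, there result a morphism $A(Y)\to A(X\circ Y)$, $v_{ab}\mapsto V_{ab}$, and, for each $a$, a morphism $A(X)\to A(X\circ Y)$, $u_{ij}\mapsto U_{ij}^{(a)}$.

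Finally I would assemble these into $\Psi\colon A(X)*_wA(Y)\to A(X\circ Y)$ and check that it inverts $\Phi$. The commutation relations $U_{ij}^{(a)}V_{ab}=V_{ab}U_{ij}^{(a)}$ required by the free wreath product, together with the key identity $U_{ij}^{(a)}V_{ab}=Z_{ia,jb}$, fall out by choosing compatible expansions: writing $U_{ij}^{(a)}=\sum_{b'}Z_{ia,jb'}$ and $V_{ab}=\sum_m Z_{ia,mb}$ with the \emph{same} row index $(i,a)$, all cross terms vanish by orthogonality of distinct entries in row $(i,a)$ of $Z$, leaving $U_{ij}^{(a)}V_{ab}=Z_{ia,jb}$; taking adjoints (the entries being self-adjoint projections) gives $V_{ab}U_{ij}^{(a)}=Z_{ia,jb}$ as well, which is simultaneously the commutation and the inversion. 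By the universal property of the free wreath product this produces $\Psi$ with $\Psi(w_{ia,jb})=U_{ij}^{(a)}V_{ab}=Z_{ia,jb}$. One then checks on generators, using $\sum_i w_{ia,jb}=v_{ab}$ and $\sum_b w_{ia,jb}=u_{ij}^{(a)}$, that $\Psi\Phi=\mathrm{id}$ and $\Phi\Psi=\mathrm{id}$, whence $\Phi$ is an isomorphism and $A(X\circ Y)\simeq A(X)*_wA(Y)$.
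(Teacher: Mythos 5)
Your proof is correct, and the computations check out: the decomposition $d_{X\circ Y}=I_n\otimes d_X+d_Y\otimes J_p$, the disjointness of the spectra of the restrictions to $\C^n\otimes\C\mathbf 1$ and $\C^n\otimes\mathbf 1^\perp$ under the stated hypothesis (with connectedness giving simplicity of $\lambda_1$), the unwinding of $QZ=ZQ$ into $\sum_k Z_{ka,jb}=\sum_k Z_{ia,kb}$, and the assembly of the inverse morphism. Note, however, that this paper never actually proves Theorem \ref{BB}: it quotes it from \cite{art2}, and instead proves Theorem \ref{T1}, where the spectral hypothesis is replaced by Sabidussi's conditions. Your endgame coincides with the paper's proof of Theorem \ref{T1}: your $V_{ab}$ and $U_{ij}^{(a)}$ are the paper's $P_a^b$ and $x_{ij}^{(a)}$, your verification that they are magic unitaries commuting with $d_Y$ and $d_X$ parallels Propositions \ref{l2} and \ref{X2}, and your $\Psi$ is the paper's $\pi$, closed by the same identities $U_{ij}^{(a)}V_{ab}=Z_{ia,jb}$, $\sum_b w_{ia,jb}=u_{ij}^{(a)}$, $\sum_k w_{ka,jb}=v_{ab}$. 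The genuine difference is how the crucial input is obtained: the paper derives the independence of $P_a^b(j)=\sum_k Z_{ka,jb}$ from $j$ (the key lemma, Lemma \ref{LL}) by a combinatorial case analysis resting on Sabidussi's conditions (Lemmas \ref{L2}--\ref{L4}), whereas you get it --- together with the transposed identity, simultaneously --- in one stroke, because the gap condition makes $\C^n\otimes\C\mathbf 1$ a full spectral subspace of $d_{X\circ Y}$, so that $Q=I_n\otimes\frac1pJ_p$ is a polynomial in $d_{X\circ Y}$ and hence commutes with $Z$. This spectral mechanism is essentially that of \cite{art2}, which the paper alludes to in the remark following Lemma \ref{LLL}. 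The trade-off is exactly the one motivating this paper: your argument is short and purely linear-algebraic, but it cannot start without the gap condition --- for graphs such as $K_n\circ C_{3k}$ of Example \ref{Ex1} the two spectra overlap, $Q$ is no longer a spectral projection of $d_{X\circ Y}$, and there is no reason for it to commute with $Z$ --- whereas the paper's combinatorial route needs no spectral assumption. One detail you should write out: the column orthogonality of $U^{(a)}$ (products $U_{ij}^{(a)}U_{i'j}^{(a)}$ with $i\ne i'$) is not a direct same-row or same-column orthogonality of entries of $Z$, since the cross terms $Z_{ia,jb}Z_{i'a,jb'}$ with $b\ne b'$ involve distinct rows and distinct columns; it does follow, once the row and column sums are established, from the general fact that projections summing to $1$ are pairwise orthogonal --- this is the point where the paper instead invokes its Lemma \ref{l00}.
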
\smallskip

The main result of this paper is this following generalization of Sabidussi's result.

\begin{theorem}\label{T1}Let $X$ and $Y$ be two finite regular graphs.

 If $\left[(S_Y\ne\emptyset)\Rightarrow (X\text{  is connected })\ \text{and } (T_Y\ne\emptyset)\Rightarrow (X^c\text{ is connected})\right]$, then we have
$$A(X\circ Y)\simeq A(X)*_wA(Y)$$
\end{theorem}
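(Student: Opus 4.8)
The plan is to construct an explicit inverse to the surjection $\psi\colon A(X\circ Y)\to A(X)*_wA(Y)$, $Z_{ia,jb}\mapsto u_{ij}^{(a)}v_{ab}$ of the previous proposition, so that $\psi$ becomes an isomorphism. The shape of the inverse is dictated by the generating magic unitary $w_{ia,jb}=u_{ij}^{(a)}v_{ab}$ of the free wreath product: since each $u^{(a)}$ and $v$ are magic unitary, one has $\sum_{b}w_{ia,jb}=u_{ij}^{(a)}$ and $\sum_{j}w_{ia,jb}=v_{ab}$. Mirroring these formulas, I would set, inside $A(X\circ Y)$,
\[
\tilde u_{ij}^{(a)}:=\sum_{b=1}^{n}Z_{ia,jb},\qquad \tilde v_{ab}:=\sum_{j=1}^{p}Z_{ia,jb}
\]
(the second expression for an arbitrary fixed $i$), and attempt to define a morphism $\phi\colon A(X)*_wA(Y)\to A(X\circ Y)$ by $u_{ij}^{(a)}\mapsto\tilde u_{ij}^{(a)}$ and $v_{ab}\mapsto\tilde v_{ab}$.

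The first and decisive point -- the \emph{key lemma} -- is that $\tilde v_{ab}$ does not depend on the choice of $i$; equivalently, the quantum automorphism encoded by $Z$ respects the block decomposition of $X\circ Y$. This is exactly where regularity of $X,Y$ and Sabidussi's hypotheses on $S_Y,T_Y$ enter, and I expect it to be the main obstacle: without them the block sums genuinely fail to be well defined. I would take this statement for granted here, proving it separately, and use it as the foundation for everything that follows.

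Granting the key lemma, the remaining steps are direct computations with magic unitaries and the adjacency relations, via the equivalent conditions of Proposition \ref{ppp1}. First, $(\tilde v_{ab})$ is a magic unitary: each $\tilde v_{ab}$ is a projection (a sum of mutually orthogonal entries lying in the single row $(i,a)$ of $Z$), the row sums give $\sum_b\tilde v_{ab}=\sum_{j,b}Z_{ia,jb}=1$, and, after averaging over $i$ using well-definedness, the column sums give $\sum_a\tilde v_{ab}=1$. The commutation $d_Y\tilde v=\tilde v d_Y$ follows from Proposition \ref{ppp1}(3) for $X\circ Y$ applied to pairs of vertices in distinct blocks $a\neq b$, whose adjacency is governed solely by $a\sim_Y b$. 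Analogously, each $(\tilde u_{ij}^{(a)})_{i,j}$ is a magic unitary commuting with $d_X$, this time using the intra-block adjacency ($a=b$, where $(i,a)\sim(j,a)\Leftrightarrow i\sim_X j$). Finally the free wreath commuting relations $\tilde u_{ij}^{(a)}\tilde v_{ab}=\tilde v_{ab}\tilde u_{ij}^{(a)}$ are checked from the orthogonality relations in $Z$, so that $\phi$ descends to the quotient $A(X)*_wA(Y)$ of $A(X)^{*n}*A(Y)$ and is a Woronowicz algebra morphism, clearly intertwining the comultiplications on generators.

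It then remains to verify that $\phi$ and $\psi$ are mutually inverse. On generators, $\psi(\tilde u_{ij}^{(a)})=\sum_b u_{ij}^{(a)}v_{ab}=u_{ij}^{(a)}$ and $\psi(\tilde v_{ab})=\sum_j u_{ij}^{(a)}v_{ab}=v_{ab}$, so $\psi\circ\phi=\mathrm{id}$. Conversely $\phi\psi(Z_{ia,jb})=\tilde u_{ij}^{(a)}\tilde v_{ab}=\bigl(\sum_c Z_{ia,jc}\bigr)\bigl(\sum_{j'}Z_{ia,j'b}\bigr)=Z_{ia,jb}$, the cross terms vanishing because entries in the same row $(i,a)$ of the magic unitary $Z$ are orthogonal; hence $\phi\circ\psi=\mathrm{id}$. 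Therefore $\psi$ is an isomorphism of Woronowicz algebras, which is the claim.
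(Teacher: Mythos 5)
Your proposal is correct and is essentially the paper's own proof: the paper likewise defers the key lemma to a separate section, defines $x_{ij}^{(a)}=\sum_{L}Z_{ia,jL}$ (your $\tilde u_{ij}^{(a)}$) and $P_a^b=\sum_{s}Z_{sa,kb}$ (independent of the column index $k$ by the key lemma), proves these are magic unitaries commuting with $d_X$ and $d_Y$ respectively and with each other, and obtains the inverse Woronowicz algebra morphism $u_{ij}^{(a)}\mapsto x_{ij}^{(a)}$, $v_{ab}\mapsto P_a^b$, checking mutual inversion exactly as you do. The only cosmetic difference is that your $\tilde v_{ab}=\sum_{j}Z_{ia,jb}$ sums along a fixed row of $Z$ whereas the paper's $P_a^b$ sums along a fixed column; these are exchanged by the antipode $S(Z_{ia,jb})=Z_{jb,ia}$, so your form of the key lemma is equivalent to the paper's.
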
\smallskip

\begin{remark} The above result is in fact an equivalence: if $A(X\circ Y)\simeq A(X)*_wA(Y)$ then this isomorphism induces an isomorphism on the abelianisations of these algebras, so that $\mathrm{Aut}(X\circ Y)\simeq\mathrm{Aut}(X)\wr\mathrm{Aut}(Y)$. Thus Sabidussi's result yields that the above condition is satisfied.
\end{remark}

If $A(X)=C(\mathbb{G}_X)$ and $A(Y)=C(\mathbb{G}_Y)$ we will denote by $\mathbb{G}_X\wr_*\mathbb{G}_Y$ the quantum subgroup of $S_{np}^+$ such that 

$$A(X)*_wA(Y)=C(\mathbb{G}_X\wr_*\mathbb{G}_Y)$$\smallskip
To prove this theorem we use similar ideas to those in the case $Y=X_n$ and $X$ connected in \cite{art1}, but the general structure is much more complicated.\smallskip

 If $L,J\in[1,n]$ and $k\in [1,p]$ we denote
$$P_L^J(k):=\sum_{s=1}^p{Z_{sL,kJ}}$$\smallskip

\begin{lem}[Key lemma]\label{LL} Assume that $X$ and $Y$ satisfy the assumption of Theorem \ref{T1}. Let $L,J\in[1,n]$, then for all $k_1,k_2\in [1,p]$ we have $$P_L^J(k_1)=P_L^J(k_2)$$ Thus $P_L^J(k)$ does not depend on $k$ and we denote $P_L^J:=P_L^J(k)$, for any $k$.
\end{lem}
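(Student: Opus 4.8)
The plan is to fix $L,J\in[1,n]$ and to treat, for each $k\in[1,p]$, the family $(P_{L'}^J(k))_{1\le L'\le n}$ as a partition of unity. Indeed $P_{L'}^J(k)=\sum_{s}Z_{sL',kJ}$ is a sum of projections lying in a single column of the magic unitary $Z$, hence is itself a projection, and $\sum_{L'}P_{L'}^J(k)=\sum_{(s,L')}Z_{sL',kJ}=1$. With this in hand, to obtain $P_L^J(k_1)=P_L^J(k_2)$ it suffices to prove that $P_L^J(k_1)P_{L'}^J(k_2)=0$ for every $L'\ne L$: then $P_L^J(k_1)=P_L^J(k_1)\sum_{L'}P_{L'}^J(k_2)=P_L^J(k_1)P_L^J(k_2)$, and taking adjoints $P_L^J(k_1)=P_L^J(k_2)P_L^J(k_1)$, so that $P_L^J(k_1)\le P_L^J(k_2)$; the symmetric computation gives the reverse inequality, whence equality. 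So the whole task reduces to controlling the cross-terms between the two columns.

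First I would record the elementary vanishing relations coming from the quantum automorphism conditions. Applying Proposition \ref{ppp1}(3) to $X\circ Y$, and also to its complement since $A(X\circ Y)=A((X\circ Y)^c)$, one gets $Z_{v,w}Z_{v',w'}=0$ as soon as the image pair $(v,v')$ and the source pair $(w,w')$ have opposite natures in $X\circ Y$. Feeding the adjacency rule of the lexicographic product into this, for images $(s,L),(s',L')$ and sources $(k_1,J),(k_2,J)$ (with $k_1\ne k_2$, $L\ne L'$), produces two relations: if $k_1\sim_X k_2$ and $L\not\sim_Y L'$ then $P_L^J(k_1)P_{L'}^J(k_2)=0$; and if $k_1\not\sim_X k_2$ and $L\sim_Y L'$ then $P_L^J(k_1)P_{L'}^J(k_2)=0$. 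Thus along an edge $k_1\sim_X k_2$ the only cross-terms not vanishing for free are those with $L'\in\mathcal{V}_Y(L)$, and along a non-edge $k_1\not\sim_X k_2$ only those with $L'\notin\mathcal{V}_Y(L)\cup\{L\}$; note that the two obstructions are complementary in $Y$. Since at least one of $X$, $X^c$ is connected, it is natural to propagate the desired equality along edges of $X$ when $X$ is connected, and along edges of $X^c$ (i.e. non-edges of $X$) otherwise, reducing the global statement to its instances for adjacent, respectively non-adjacent, pairs $k_1,k_2$ and then chaining along a path.

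The hard part — and the reason this is deferred to a separate section — is to eliminate the surviving \emph{obstruction} cross-terms, i.e. to prove $P_L^J(k_1)P_{L'}^J(k_2)=0$ also when the $X$-nature of $(k_1,k_2)$ and the $Y$-nature of $(L,L')$ agree. I expect the argument to turn on whether $L$ and $L'$ are twins. When they are not twins, some vertex $M$ of $Y$ is adjacent to exactly one of them, and this distinguishing vertex should let one reduce the obstruction term to an already-vanishing configuration by inserting the partition of unity attached to block $M$ and invoking the relations above. When $L$ and $L'$ are genuine twins, the two copies of $X$ they carry are indistinguishable from the standpoint of $Y$, and only connectivity can break the symmetry: a surviving obstruction along an $X$-edge forces $(L,L')\in T_Y$, so that $X^c$ is connected by hypothesis, while one along an $X$-non-edge forces $(L,L')\in S_Y$, so that $X$ is connected; this is precisely where the hypotheses of Theorem \ref{T1} enter, and the twin case should be handled by adapting to a single twin class the connected-graph argument of \cite{art1}, with regularity of $X$ and $Y$ keeping the relevant neighbourhood sums uniform across blocks. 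The genuine obstacle is this last step: the twin obstructions do not vanish pairwise but only after a global propagation, so the delicate point is to orchestrate the distinguishing-vertex reductions together with the connectivity-based propagation so that, once all obstruction terms are killed, the reduction of the first paragraph delivers $P_L^J(k_1)=P_L^J(k_2)$ and connectivity promotes it to all $k_1,k_2$.
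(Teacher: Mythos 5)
Your global architecture does match the paper's: reduce $P_L^J(k_1)=P_L^J(k_2)$ to the vanishing of cross-terms, kill for free those where the natures of $(L,L')$ in $Y$ and of $(k_1,k_2)$ in $X$ disagree (this is exactly Lemma \ref{L2}), observe that a surviving obstruction forces $(L,L')$ to be a twin pair of the appropriate type (the ``in particular'' clause of Lemma \ref{L4}, which uses regularity of $Y$), and propagate along edges or non-edges of $X$ by connectivity. The gap is that the two steps carrying all the technical weight are left unproved, and the one mechanism you do propose fails as stated. For a non-twin pair you claim a distinguishing vertex $M$ lets one kill the obstruction term ``by inserting the partition of unity attached to block $M$ and invoking the relations above'', where the relations above are only the nature-mismatch vanishing relations. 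But after inserting $\sum_Q P_Q^J(s)$ between $P_L^J(k_1)$ and $P_{L'}^J(k_2)$, the terms with $Q=L$, $Q=L'$ and every $Q$ whose natures match on both sides survive those relations, and no choice of $s$ or $M$ makes them disappear. The paper eliminates them by genuinely quantitative identities: applying the sum form of the relations (Proposition \ref{ppp1}(2)) to $X\circ Y$, summing over the first index and using regularity of $X$, it derives the identity $(**)$ of Lemma \ref{L21} with explicit integer coefficients built from $p$, $\alpha(X)$, $\lambda(X)$; combining these (Lemma \ref{L3}) yields $\bigl(|A|-\alpha(X)-p\,|\mathcal{W}_Y(L)\cap\mathcal{V}_Y(L')|\bigr)P_L^J(k_1)P_{L'}^J(k_2)=0$ with $A=\mathcal{W}_X(k_1)\cap\mathcal{V}_X(k_2)$, and a counting bound ($|A|\le p$) shows the coefficient is nonzero, with the degenerate cases $X=K_p$ and $X=X_p$ needing separate treatment. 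None of this counting machinery, which is where regularity of $X$ actually enters, appears in your sketch.

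The second gap is the case $S_Y\ne\emptyset$ and $T_Y\ne\emptyset$ (hence $X$ and $X^c$ both connected), where you correctly note that twin obstructions do not vanish pairwise, but then only name the difficulty instead of resolving it. The paper's resolution is a precise mechanism you would need to reproduce: when the coefficient above vanishes, Lemma \ref{L3} gives the dichotomy that for $k_2\in\mathcal{V}_X(k_1)$ either $P_L^J(k_1)=P_L^J(k_2)$ or $\mathcal{W}_X(k_1)\subset\mathcal{V}_X(k_2)$; writing $\mathcal{V}_X(k_1)=S_1\sqcup S_2$ accordingly, one shows $S_2=\emptyset$ by playing connectivity of $X^c$ against these inclusions (if every vertex of $S_2$ were adjacent to every vertex of $S_1$, then $S_2$ would be isolated in $X^c$; otherwise a non-edge between some $s_1\in S_1$ and $s_2\in S_2$ contradicts the complementary dichotomy), and symmetrically $S_2'=\emptyset$ using connectivity of $X$. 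This interplay between the two dichotomies and the two connectivities --- not an adaptation of the connected-case argument of \cite{art1} to a single twin class --- is the actual content of the hard case, so as it stands your proposal is a reasonable plan whose decisive steps remain open.
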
\smallskip

The proof of this lemma will be the purpose of the next section. We now prove Theorem \ref{T1} by admitting this key lemma.\smallskip

\begin{lem}\label{l0}For $L,L',J\in[1,n]$ we have this following relations:
$$P_L^JP_{L'}^J=\delta_{L,L'}P_L^J,\ \ \  P_J^LP_{J}^{L'}=\delta_{L,L'}P_J^L,\ \ \ \sum_{L=1}^{n}{P_L^J}=1,\ \ \ \sum_{L=1}^n{P_J^L}=1$$

$$\Delta(P_L^J)=\sum_{S=1}^{n}{P_L^S\otimes P_S^J},\ \ \ \ \ \ \epsilon(P_L^J)=\delta_{L,J},\ \ \ \ \ \ \ S(P_L^J)=P_J^L $$
\end{lem}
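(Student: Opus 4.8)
The plan is to verify the seven identities directly from the definition $P_L^J=\sum_{s=1}^p Z_{sL,kJ}$ (for any fixed $k$), using only the magic unitary relations and the Woronowicz structure formulas for $Z$ recorded above, together with the $k$-independence supplied by the key lemma. These identities say precisely that $(P_L^J)_{1\le L,J\le n}$ is an $n\times n$ magic unitary whose comultiplication, counit and antipode are the standard ones; this is the matrix encoding the $A(Y)$-part of the free wreath product, so establishing the lemma is what will allow the construction of the inverse morphism.

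First I would record that each $P_L^J$ is a projection: the entries $Z_{sL,kJ}$ for $s=1,\dots,p$ all lie in the single column $(k,J)$ of the magic unitary $Z$, hence are pairwise orthogonal projections, and their sum $P_L^J$ is again a projection. This gives the diagonal cases $L=L'$ of the first two relations. For the off-diagonal case of the first relation, I fix a common $k$ (legitimate by the key lemma) and expand $P_L^J P_{L'}^J=\sum_{s,s'}Z_{sL,kJ}Z_{s'L',kJ}$; since $L\ne L'$ the rows $(s,L)$ and $(s',L')$ are always distinct while the column $(k,J)$ is shared, so every term vanishes. The third relation is immediate from the column-sum relation of $Z$: fixing $k$, one has $\sum_L P_L^J=\sum_{s,L}Z_{sL,kJ}=1$.

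The counit and comultiplication formulas are substitutions. For the counit, $\epsilon(Z_{sL,kJ})=\delta_{sk}\delta_{LJ}$ gives $\epsilon(P_L^J)=\sum_s\delta_{sk}\delta_{LJ}=\delta_{LJ}$. For the comultiplication I expand $\Delta(P_L^J)=\sum_s\Delta(Z_{sL,kJ})$ using $\Delta(Z_{ia,jb})=\sum_{m,S}Z_{ia,mS}\otimes Z_{mS,jb}$, and then regroup: the inner sum $\sum_s Z_{sL,mS}$ equals $P_L^S$ and $\sum_m Z_{mS,kJ}$ equals $P_S^J$, both identifications using the key lemma to discard the dependence on $m$ and on $k$ respectively, yielding $\Delta(P_L^J)=\sum_S P_L^S\otimes P_S^J$.

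The main obstacle is the second orthogonality relation $P_J^L P_J^{L'}=\delta_{L,L'}P_J^L$: expanding it produces cross terms $Z_{sJ,kL}Z_{s'J,k'L'}$ with $s\ne s'$ lying in different rows and different columns, so no magic unitary relation forces them to vanish individually. I would instead route through the antipode. Applying $S$ to $P_L^J=\sum_s Z_{sL,kJ}$ and using $S(Z_{ia,jb})=Z_{jb,ia}$ gives $S(P_L^J)=\sum_s Z_{kJ,sL}$; since the left-hand side is $k$-independent I may average over $k$, and after relabelling the two summation indices this double sum equals $\frac1p\sum_{k,s}Z_{sJ,kL}=P_J^L$, which proves the antipode formula. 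Because $S$ is an anti-homomorphism, applying it to the already-established first relation turns it into $P_J^{L'}P_J^L=\delta_{L,L'}P_J^L$, which is exactly the second relation, and applying $S$ to the third relation (with $S(1)=1$) likewise yields the fourth. This reduces the delicate orthogonality to the first relation combined with the key lemma, and the rest is formal.
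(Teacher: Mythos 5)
Your proof is correct and follows essentially the same route as the paper: direct expansion via the magic unitary relations for $Z$ with the key lemma supplying $k$-independence, the antipode formula $S(P_L^J)=P_J^L$ obtained by averaging over $k$, and the delicate orthogonality $P_J^LP_J^{L'}=\delta_{L,L'}P_J^L$ deduced by applying the anti-homomorphism $S$ to the first relation. The only (inessential) divergence is the fourth identity: you get $\sum_{L}P_J^L=1$ by applying $S$ to $\sum_{L}P_L^J=1$, whereas the paper proves it directly by averaging over $k$ and using the row sums of $Z$; both arguments are valid.
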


\begin{proof} Let $L,L',J\in[1,n]$ with $L\ne L'$ and $k\in [1,p]$. We have:
$$P_L^JP_L^J=\sum_{s_1=1}^p{\sum_{s_2=1}^p{Z_{s_1L,kJ}Z_{s_2L,kJ}}}=\sum_{s=1}^p{Z_{sL,kJ}}=P_L^J \ \ \ \ \ \ $$
$$P_L^JP_{L'}^{J}=\sum_{s_1=1}^p{\sum_{s_2=1}^p{Z_{s_1L,kJ}Z_{s_2L',kJ}}}=0,\ \ \ \text{since}\ L\ne L' \ \ \ \ \ $$
$$ \sum_{L=1}^{n}{P_L^J}=\ \sum_{L=1}^{n}{\sum_{s=1}^p{Z_{sL,kJ}}}=1\ \ \ \ \ \ \ \ \ \ \ \ \ \ \ \ \ \ \ \ \ \ \ \ \ \ \ \ \ \ \ \ \ \ \ \ $$
\begin{eqnarray*} 
\sum_{L=1}^n{P_J^L}=\frac{1}{p}\sum_{L=1}^n{  \sum_{k=1}^p{P_J^L(k)}   }=\frac{1}{p}\sum_{L=1}^n{  \sum_{k=1}^p{     \sum_{s=1}^p {Z_{sJ,kL}}          }   }&=&\frac{1}{p}\sum_{s=1}^p{\left(  \sum_{L=1}^n{     \sum_{k=1}^p {Z_{sJ,kL}}          } \right)  }  \\
&=&\frac{1}{p}\sum_{s=1}^p{1  } =1  \\
\end{eqnarray*}
\begin{eqnarray*} 
\Delta(P_L^J)=  \sum_{s=1}^p{\Delta( Z_{sL,kJ} ) }&=&\sum_{s=1}^p{        \sum_{T=1}^{n}{    \sum_{t=1}^p{Z_{sL,tT}\otimes Z_{tT,kJ}}}         }  \ \ \ \  \ \ \ \ \ \ \ \ \ \ \ \ \ \ \ \ \ \ \ \ \ \ \ \\
&=&    \sum_{T=1}^{n}{    \sum_{t=1}^p{    P_L^T\otimes Z_{tT,kJ}        }}          \\
&=&\sum_{T=1}^{n}{      P_L^T\otimes P_T^J       }    \\
\end{eqnarray*}
$$\epsilon(P_L^J)=\sum_{s=1}^p{\epsilon(Z_{sL,kJ})}=\left\lbrace 
\begin{array}{lcl} 
1 & \text{if } L=J\\ 
0 & \text{ otherwise} 
\end{array}\right. \ \ \ \ \ \ \ \ \ \ \ \ \ \ \ \ \ \ \ \ \ \ \ \ \ \ \ \ \ \  \ \ \ \ \ \ \ \ \ \ \ \ \ \ \ $$
\begin{eqnarray*} 
S(P_L^J)=S\left(   \frac{1}{p} \sum_{k=1}^p{P_L^J(k)}     \right)=S\left(\frac{1}{p}  \sum_{k=1}^p{     \sum_{s=1}^p {Z_{sL,kJ}}             }\right) &=&\frac{1}{p}\sum_{s=1}^p{    \sum_{k=1}^p {S(Z_{sL,kJ})}            }  \\
&=&\frac{1}{p}\sum_{s=1}^p{    \sum_{k=1}^p {Z_{kJ,sL}}            }  \\
&=&\frac{1}{p}\sum_{s=1}^p{    P_J^L(s)          }  =P_J^L\\
\end{eqnarray*}

Finally, we have $P_J^LP_{J}^{L'}=S(P_{L'}^{J}P_L^J)=\delta_{L,L'}S(P_L^J)=\delta_{L,L'}P_J^L$, and this finishes the proof.
\end{proof}

\begin{prop}\label{l2} The matrix $P=(P_L^J)_{1\le L,J\le n}$ is magic unitary and commutes with $d_Y$.
\end{prop}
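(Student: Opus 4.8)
The plan is to treat the two assertions separately: the magic unitarity of $P$ will be essentially a formal consequence of Lemma \ref{l0}, while the commutation with $d_Y$ will be reduced, via Proposition \ref{ppp1}, to a vanishing statement that we read off from the adjacency structure of $X\circ Y$ together with the relations defining $A(X\circ Y)$.

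For the first part, recall from the Key Lemma \ref{LL} that each $P_L^J=\sum_{s=1}^p Z_{sL,kJ}$ is a well-defined element independent of $k$, so in particular it is self-adjoint as a sum of projections. Lemma \ref{l0} gives $P_L^J P_L^J=P_L^J$, hence each $P_L^J$ is a projection; it also provides the orthogonality of distinct entries in a common column ($P_L^J P_{L'}^J=\delta_{L,L'}P_L^J$) and in a common row ($P_J^L P_J^{L'}=\delta_{L,L'}P_J^L$), as well as the identities $\sum_L P_L^J=1=\sum_L P_J^L$. These are exactly the defining properties of a magic unitary matrix, so $P$ is magic unitary.

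For the second part, since $P$ is magic unitary I may invoke the equivalence $(1)\Leftrightarrow(3)$ of Proposition \ref{ppp1} for the graph $Y$: it suffices to prove that for all $a,b,c,d\in[1,n]$ with $a\sim_Y b$ and $c\not\sim_Y d$ one has $P_a^c P_b^d=0=P_c^a P_d^b$. The key step is to expand such a product using the Key Lemma: choosing arbitrary representatives $k_1,k_2\in[1,p]$, I would write $P_a^c P_b^d=\sum_{s,t=1}^p Z_{sa,k_1 c}\,Z_{tb,k_2 d}$ and analyze each factor through Proposition \ref{ppp1} applied to $Z$ and the graph $X\circ Y$. Here the lexicographic adjacency does the work: because $a\sim_Y b$, the rows satisfy $(s,a)\sim_{X\circ Y}(t,b)$ for every $s,t$, regardless of the $X$-coordinates. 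For the columns I distinguish two cases. If $c\ne d$, then neither clause of the definition of $X\circ Y$ can hold, so $(k_1,c)\not\sim_{X\circ Y}(k_2,d)$ for all $k_1,k_2$, and Proposition \ref{ppp1} forces $Z_{sa,k_1 c}Z_{tb,k_2 d}=0$ termwise, whence $P_a^c P_b^d=0$. If $c=d$, then $a\sim_Y b$ gives $a\ne b$, and the vanishing $P_a^c P_b^c=0$ is already contained in the column-orthogonality of Lemma \ref{l0}. The identity $P_c^a P_d^b=0$ is obtained in the same way, now reading the rows $(s,c),(t,d)$ as the non-adjacent pair and the columns $(k_1,a),(k_2,b)$ as the adjacent pair, and invoking the second half of condition $(3)$ of Proposition \ref{ppp1} (resp. the row-orthogonality of Lemma \ref{l0} when $c=d$).

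The step I expect to require the most care is precisely this case analysis on the columns: the definition of $X\circ Y$ makes the adjacency of $(k_1,c)$ and $(k_2,d)$ depend on the $X$-vertices only when $c=d$, so one cannot uniformly deduce the vanishing from the relations satisfied by $Z$; the borderline case $c=d$ must instead be absorbed into the magic-unitarity relations of Lemma \ref{l0}. Conversely, the favorable fact that $a\sim_Y b$ propagates adjacency to every pair $(s,a),(t,b)$ is what allows the free summation over $s,t$ and makes the $k$-independence furnished by the Key Lemma usable.
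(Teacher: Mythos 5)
Your proof is correct and follows essentially the same route as the paper: magic unitarity is read off from Lemma \ref{l0}, and commutation with $d_Y$ is reduced via condition (3) of Proposition \ref{ppp1} to vanishing of products of entries of $Z$, using the adjacency structure of $X\circ Y$ and the $k$-independence from the key lemma. The only cosmetic differences are that the paper takes the same representative $k_1=k_2=1$ in both factors, so the column vertices literally coincide when $c=d$ and no case distinction is needed, and it obtains the transposed identity $P_c^aP_d^b=0$ by applying the antipode $S$ rather than rerunning the argument.
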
\smallskip

\begin{proof}  Lemma \ref{l0} says that $P$ is magic unitary. Let $L,L',J,J'\in[1,n]$ such that $L\sim_Y L'$ and $J\not\sim_Y J'$ then
$$P_L^JP_{L'}^{J'}=\sum_{s=1}^p{  \sum_{s'=1}^p  {  Z_{sL,1J}Z_{s'L',1J'}    } }=0$$
and $P_J^LP_{J'}^{L'}=S(P_{L'}^{J'}P_L^J)=0$, so by Proposition \ref{ppp1} we get $Pd_Y=d_YP$.
\end{proof}

We consider now the matrix $x^{(a)}=(x_{ij}^{(a)})_{1\le i,j\le p}$ with
$$x_{ij}^{(a)}=\sum_{L=1}^n{Z_{ia,jL}}$$

We need the following lemma to prove some properties of $x^{(a)}$.

\begin{lem}\label{l00} Let $L,L',J\in[1,n]$, $i,i',j,j'\in[1,p]$, such that $L\ne L'$, then
$$Z_{iL,jJ}Z_{i'L',j'J}=0=Z_{jJ,iL}Z_{j'J,i'L'}$$
\end{lem}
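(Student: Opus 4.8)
The plan is to deduce the statement from the orthogonality relations for the projections $P_L^J$ recorded in Lemma \ref{l0}, the decisive point being that the $k$-independence furnished by the Key Lemma \ref{LL} is exactly what allows the two factors to sit in \emph{different} columns of $Z$. First I would use the Key Lemma to write, for the given data,
$$P_L^J=\sum_{s=1}^p Z_{sL,jJ}\quad\text{and}\quad P_{L'}^J=\sum_{s'=1}^p Z_{s'L',j'J},$$
that is, I evaluate $P_L^J(k)$ at the column index $k=j$ and $P_{L'}^J(k)$ at $k=j'$; this is legitimate precisely because $P_L^J(k)$ does not depend on $k$. Since $L\ne L'$, Lemma \ref{l0} gives $P_L^J P_{L'}^J=0$, i.e.
$$\sum_{s=1}^p\sum_{s'=1}^p Z_{sL,jJ}\,Z_{s'L',j'J}=0.$$

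Next I would isolate the single term indexed by $(s,s')=(i,i')$ by sandwiching this identity between the appropriate coefficients of $Z$. Multiplying on the left by $Z_{iL,jJ}$ and on the right by $Z_{i'L',j'J}$, and using that $Z$ is magic unitary — so that distinct entries of the common column $(j,J)$ are orthogonal, whence $Z_{iL,jJ}Z_{sL,jJ}=\delta_{is}Z_{iL,jJ}$, and likewise $Z_{s'L',j'J}Z_{i'L',j'J}=\delta_{s'i'}Z_{i'L',j'J}$ for the column $(j',J)$ — the double sum collapses to
$$Z_{iL,jJ}\Big(\sum_{s,s'}Z_{sL,jJ}Z_{s'L',j'J}\Big)Z_{i'L',j'J}=Z_{iL,jJ}\,Z_{i'L',j'J}.$$
Since the bracketed sum vanishes, this yields $Z_{iL,jJ}Z_{i'L',j'J}=0$, as desired.

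Finally, for the second identity $Z_{jJ,iL}Z_{j'J,i'L'}=0$ I would run the identical argument starting from the relation $P_J^L P_J^{L'}=0$ of Lemma \ref{l0} — again writing $P_J^L=\sum_s Z_{sJ,iL}$ and $P_J^{L'}=\sum_{s'}Z_{s'J,i'L'}$ by the Key Lemma — and then sandwiching by $Z_{jJ,iL}$ on the left and $Z_{j'J,i'L'}$ on the right, collapsing via column-orthogonality in the columns $(i,L)$ and $(i',L')$. Alternatively it follows from the first identity by applying the antipode $S$ (an anti-homomorphism with $S(Z_{ia,jb})=Z_{jb,ia}$) and using that the entries of $Z$ are self-adjoint projections, so that $pq=0$ forces $qp=(pq)^*=0$. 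I do not expect any genuine obstacle here: the entire content is the observation that the Key Lemma lets me place the two factors in the two distinct columns $(j,J)$ and $(j',J)$, after which everything is forced by magic unitarity. Were I restricted to a single column I could only recover the trivial case $j=j'$, which is immediate from the orthogonality of distinct entries of $Z$ in a common column.
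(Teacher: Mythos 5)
Your proof is correct and takes essentially the same approach as the paper: both hinge on the Key Lemma's $k$-independence to evaluate the column-independent projections at the two distinct columns $(j,J)$ and $(j',J)$, and then conclude by column-orthogonality of the magic unitary $Z$, with the second identity obtained via the antipode exactly as in the paper. The only cosmetic difference is that the paper inserts the single projection $P_L^J$ between the two factors (using $Z_{iL,jJ}P_L^J=Z_{iL,jJ}$ and $P_L^JZ_{i'L',j'J}=0$), whereas you sandwich the vanishing product $P_L^JP_{L'}^J$ from Lemma \ref{l0}.
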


\begin{proof} With the assumption we get
$$Z_{iL,jJ}P_L^J=Z_{iL,jJ}P_L^J(j)= \sum_{s=1}^p{Z_{iL,jJ}Z_{sL,jJ}}=Z_{iL,jJ}=P_L^JZ_{iL,jJ}$$
and
$$Z_{i'L',j'J}P_L^J=Z_{i'L',j'J}P_L^J(j')= \sum_{s=1}^p{Z_{i'L',j'J}Z_{sL,j'J}}=0=P_L^JZ_{i'L',j'J}$$
Hence
$$Z_{iL,jJ}Z_{i'L',j'J}=Z_{iL,jJ}P_L^JZ_{i'L',j'J}=0$$
Finally we obtain the second equality, by applying the antipode $S$ to the first one, since $S(Z_{iL,jJ})=Z_{jJ,iL}$.
\end{proof}\smallskip

\begin{lem}\label{X0} Let $a,b\in[1,n]$, for $i,i',j\in[1,p]$, we have this following relations:
$$x_{ij}^{(a)}x_{ij'}^{(a)}=\delta_{j,j'}x_{ij}^{(a)},\ \ \  x_{ji}^{(a)}x_{j'i}^{(a)}=\delta_{j,j'}x_{ji}^{(a)},\ \ \ \sum_{k=1}^{n}{x_{ik}^{(a)}}=1,\ \ \ \sum_{k=1}^n{x_{kj}^{(a)}}=1$$

$$\Delta(x_{ij}^{(a)})=\sum_{(s,k)\in [1,p]\times[1,n]}         {x_{is}^{(a)}P_a^k\otimes x_{sj}^{(k)}} ,\ \ \ \epsilon(x_{ij}^{(a)})=\delta_{ij},\ \ \ S(x_{ij}^{(a)})=  \sum_{L=1}^n{   x_{ji}^{(L)}P_L^a  }  $$
and $$x_{ij}^{(a)}P_a^b=P_a^bx_{ij}^{(a)}$$
\end{lem}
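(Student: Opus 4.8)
The plan is to reduce everything to a single structural identity, which I will denote $(\star)$, namely
\[
Z_{ia,jb} = x_{ij}^{(a)}\, P_a^b = P_a^b\, x_{ij}^{(a)}, \qquad 1\le i,j\le p,\ 1\le a,b\le n.
\]
Once $(\star)$ is established, the commutation relation is immediate and the magic-unitary and Hopf-algebraic formulas follow by direct computation from the corresponding data for $Z$. The only two genuinely nontrivial inputs are the key lemma (Lemma~\ref{LL}), which is what legitimizes treating $P_a^b$ as the $k$-independent projection $\sum_t Z_{ta,jb}$, and Lemma~\ref{l00}, which kills the "off-diagonal" cross terms that the magic unitarity of $Z$ alone cannot reach.

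First I would check that $x^{(a)}$ is magic unitary of size $p$, i.e.\ relations (1)--(4). The row relations $x_{ij}^{(a)}x_{ij'}^{(a)}=\delta_{jj'}x_{ij}^{(a)}$ and $\sum_{j}x_{ij}^{(a)}=1$ are immediate from $x_{ij}^{(a)}=\sum_b Z_{ia,jb}$: expanding the product gives $\sum_{b,b'}Z_{ia,jb}Z_{ia,j'b'}$, whose terms all lie in the single row $(i,a)$ of $Z$ and so vanish unless $(j,b)=(j',b')$, while $\sum_j x_{ij}^{(a)}$ is the sum over the whole row $(i,a)$, equal to $1$. The column relation $x_{ji}^{(a)}x_{j'i}^{(a)}=\delta_{jj'}x_{ji}^{(a)}$ is the first point requiring more: after expanding into $\sum_{L,L'}Z_{ja,iL}Z_{j'a,iL'}$, the terms with $L=L'$ vanish for $j\ne j'$ by same-column orthogonality of $Z$, and the terms with $L\ne L'$ vanish by Lemma~\ref{l00}. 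Finally the column sum $\sum_i x_{ij}^{(a)}=1$ uses the key lemma: one rewrites $\sum_i Z_{ia,jL}=P_a^L$ and then applies $\sum_L P_a^L=1$ from Lemma~\ref{l0}.

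Next I would prove $(\star)$. Using the key lemma to write $P_a^b=P_a^b(j)=\sum_{t}Z_{ta,jb}$, I expand
\[
x_{ij}^{(a)}\,P_a^b = \sum_{L=1}^{n}\sum_{t=1}^{p} Z_{ia,jL}\,Z_{ta,jb}.
\]
In this double sum the term $t=i,\ L=b$ contributes $Z_{ia,jb}$; the terms $t=i,\ L\ne b$ vanish by same-row orthogonality of $Z$; the terms $t\ne i,\ L=b$ vanish by same-column orthogonality; and the remaining terms $t\ne i,\ L\ne b$ vanish by Lemma~\ref{l00}. Hence $x_{ij}^{(a)}P_a^b=Z_{ia,jb}$. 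Since $Z_{ia,jb}$ is a self-adjoint projection while $x_{ij}^{(a)}$ and $P_a^b$ are self-adjoint, taking adjoints of this equality gives $P_a^b\,x_{ij}^{(a)}=Z_{ia,jb}$ as well. This simultaneously establishes $(\star)$ and the commutation relation $x_{ij}^{(a)}P_a^b=P_a^b x_{ij}^{(a)}$.

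It then remains to read off the Hopf-algebra formulas. The counit is immediate: $\epsilon(x_{ij}^{(a)})=\sum_b\epsilon(Z_{ia,jb})=\sum_b\delta_{ij}\delta_{ab}=\delta_{ij}$. For the coproduct I apply $\Delta$ to the definition and regroup the inner index into the right tensor leg,
\[
\Delta(x_{ij}^{(a)})=\sum_{b=1}^n\Delta(Z_{ia,jb})=\sum_{s,k} Z_{ia,sk}\otimes\Big(\sum_{b}Z_{sk,jb}\Big)=\sum_{s,k} Z_{ia,sk}\otimes x_{sj}^{(k)},
\]
and substituting $Z_{ia,sk}=x_{is}^{(a)}P_a^k$ from $(\star)$ yields exactly the asserted formula. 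For the antipode, $S(x_{ij}^{(a)})=\sum_b S(Z_{ia,jb})=\sum_b Z_{jb,ia}$, and applying $(\star)$ in the form $Z_{jb,ia}=x_{ji}^{(b)}P_b^a$ (then relabelling $b\to L$) gives $S(x_{ij}^{(a)})=\sum_L x_{ji}^{(L)}P_L^a$. The main obstacle is the verification of $(\star)$: it is precisely the cross terms with $t\ne i$ and $L\ne b$ that are invisible to the magic unitarity of $Z$ and require Lemma~\ref{l00}, and the key lemma is what makes the manipulation of $P_a^b$ as a $k$-independent projection legitimate in the first place. Everything after $(\star)$ is purely formal.
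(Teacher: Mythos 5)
Your proposal is correct and follows essentially the same route as the paper: both rest on the magic unitarity of $Z$, Lemma~\ref{l00} for the cross terms, the key lemma to treat $P_a^b$ as $k$-independent, and the central identity $x_{ij}^{(a)}P_a^b=Z_{ia,jb}$, from which the coproduct and antipode formulas are read off. The only (harmless) cosmetic difference is that you obtain $P_a^bx_{ij}^{(a)}=Z_{ia,jb}$ by taking adjoints of self-adjoint elements, whereas the paper performs the symmetric computation directly.
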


\begin{proof}
Using that $Z$ is magic unitary and Lemma \ref{l00} we have:
$$x_{ij}^{(a)}x_{ij'}^{(a)}=\sum_{L=1}^n{      \sum_{L'=1}^n{Z_{ia,jL} Z_{ia,j'L'}}        }=\delta_{j,j'}\sum_{L=1}^n{Z_{ia,jL}}= \delta_{j,j'} x_{ij}^{(a)} $$
$$x_{ji}^{(a)}x_{j'i}^{(a)}=\sum_{L=1}^n{      \sum_{L'=1}^n{Z_{ja,iL} Z_{j'a,iL'}}        }=    \sum_{L=1}^n{Z_{ja,iL} Z_{j'a,iL}}   =       \delta_{j,j'}x_{ji}^{(a)}$$
$$\sum_{k=1}^p{x_{ik}^{(a)}  } =\sum_{k=1}^p{ \sum_{L=1}^n{Z_{ia,kL}} }=1\ \ \ \ \ \ \ \ \ \ \ \ \ \ \ \ \ \ \ \ \ \ \ \ \ \ \ \ \ \ \ \ \ \ \ \ \ \ \ \ \ $$

and with Lemma \ref{l0}, we obtain
$$\sum_{k=1}^p{x_{kj}^{(a)}  }   =\sum_{k=1}^p{   \sum_{L=1}^n{Z_{ka,jL}}    }= \sum_{L=1}^n{ P_a^L(j)   } =\sum_{L=1}^n{ P_a^L}=1$$
\smallskip

We also have, using Lemma \ref{l00},
$$x_{ij}^{(a)}P_a^b=\sum_{L=1}^n{\sum_{s=1}^p Z_{ia,jL}Z_{sa,jb}  }=\sum_{s=1}^p Z_{ia,jb}Z_{sa,jb}=Z_{ia,jb}$$
$$P_a^bx_{ij}^{(a)}=\sum_{L=1}^n{\sum_{s=1}^p Z_{sa,jb}Z_{ia,jL}  }=\sum_{s=1}^p Z_{sa,jb}Z_{ia,jb}=Z_{ia,jb}$$

Then we get
\begin{eqnarray*} 
\Delta(x_{ij}^{(a)})=\sum_{L=1}^n{   \Delta(Z_{ia,jL})            }&=&\sum_{L=1}^n{   \sum_{(s,k)\in [1,p]\times[1,n]}         {Z_{ia,sk}\otimes Z_{sk,jL}}  }\\
&=&  \sum_{(s,k)\in [1,p]\times[1,n]}         {Z_{ia,sk}\otimes \sum_{L=1}^n{ Z_{sk,jL}}  }\\
&=&\sum_{(s,k)\in [1,p]\times[1,n]}         {x_{is}^{(a)}P_a^k\otimes x_{sj}^{(k)}}\\
\end{eqnarray*}
$$\ \ \ \ \ \ \ \ \ \ \ \ \ \ \ \epsilon(x_{ij}^{(a)})=\sum_{L=1}^n{\epsilon(Z_{ia,jL})}=\delta_{ij} \ \ \ \ \ \ \ \  \ \ \ \ \ \ \ \ \ \ \ \ \  \ \  \ \ \  \ \ \ \ \ \ \ \ \  \ \ \ \ \  \ \ \ \ \ \ \ \ \ \ \ \  \ \ \ \ \ \ \  $$
$$S(x_{ij}^{(a)})=\sum_{L=1}^n{S(Z_{ia,jL})}=\sum_{L=1}^n{Z_{jL,ia}}=\sum_{L=1}^n{  x_{ji}^{(L)}P_L^a   }\ \ \ \ \ \ \ \ \ \ \ \ \ \ \ $$

This finishes the proof.
\end{proof}

\begin{prop}\label{X2} For all $a\in[1,n]$ the matrix $x^{(a)}$ is magic unitary and commutes with $d_X$.
\end{prop}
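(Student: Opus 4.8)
The plan is to treat the two assertions separately, since the magic unitarity is essentially free and the commutation with $d_X$ carries all the content. For the magic unitarity I would simply read everything off from Lemma \ref{X0}: each entry $x_{ij}^{(a)}=\sum_{L}Z_{ia,jL}$ is a sum of pairwise orthogonal projections (same row of $Z$), hence a self-adjoint idempotent, and the relations $x_{ij}^{(a)}x_{ij'}^{(a)}=\delta_{jj'}x_{ij}^{(a)}$, $x_{ji}^{(a)}x_{j'i}^{(a)}=\delta_{jj'}x_{ji}^{(a)}$ together with $\sum_k x_{ik}^{(a)}=1=\sum_k x_{kj}^{(a)}$ are exactly the magic unitary axioms. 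No new computation is needed there.

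For the commutation $d_Xx^{(a)}=x^{(a)}d_X$ I would invoke Proposition \ref{ppp1}, applied this time to the graph $X$ and the magic unitary $x^{(a)}$: it suffices to check that $i\sim_X j$ and $k\not\sim_X l$ force $x_{ik}^{(a)}x_{jl}^{(a)}=0=x_{ki}^{(a)}x_{lj}^{(a)}$ for all $i,j,k,l\in[1,p]$. The tempting first move is to expand $x_{ik}^{(a)}x_{jl}^{(a)}=\sum_{L,L'}Z_{ia,kL}Z_{ja,lL'}$ and annihilate each summand through Proposition \ref{ppp1}(3) for the graph $X\circ Y$, using that $(i,a)\sim_{X\circ Y}(j,a)$. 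This is precisely the step I expect to be the main obstacle: it does \emph{not} close, because the columns $(k,L)$ and $(l,L')$ may be adjacent in $X\circ Y$ when $L\sim_Y L'$, even though $k\not\sim_X l$, so the off-diagonal $Y$-terms are not killed directly.

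The way around this is to cut by the projections $P_a^L$ and use $\sum_L P_a^L=1$ (Lemma \ref{l0}). Using the relations from Lemma \ref{X0}, namely $x_{ij}^{(a)}P_a^b=Z_{ia,jb}$, the commutation $x_{ij}^{(a)}P_a^b=P_a^bx_{ij}^{(a)}$, and the idempotency $(P_a^L)^2=P_a^L$, I would compute
\[
x_{ik}^{(a)}x_{jl}^{(a)}P_a^L=x_{ik}^{(a)}P_a^Lx_{jl}^{(a)}P_a^L=(x_{ik}^{(a)}P_a^L)(x_{jl}^{(a)}P_a^L)=Z_{ia,kL}Z_{ja,lL}.
\]
The point is that this diagonal product has both columns sharing the \emph{same} $Y$-coordinate $L$, so $k\not\sim_X l$ now genuinely gives $(k,L)\not\sim_{X\circ Y}(l,L)$ while $(i,a)\sim_{X\circ Y}(j,a)$; hence $Z_{ia,kL}Z_{ja,lL}=0$ by Proposition \ref{ppp1}(3) for $X\circ Y$ (the case $k=l$ being covered by column orthogonality of $Z$). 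Writing $x_{ik}^{(a)}x_{jl}^{(a)}=x_{ik}^{(a)}x_{jl}^{(a)}\sum_L P_a^L$ and summing then yields $x_{ik}^{(a)}x_{jl}^{(a)}=0$. The companion identity $x_{ki}^{(a)}x_{lj}^{(a)}=0$ follows by the same cut-and-sum, now reaching $Z_{ka,iL}Z_{la,jL}$ and using the symmetric half of Proposition \ref{ppp1}(3) (here $(i,L)\sim_{X\circ Y}(j,L)$ while $(k,a)\not\sim_{X\circ Y}(l,a)$). With both products vanishing, Proposition \ref{ppp1} gives $d_Xx^{(a)}=x^{(a)}d_X$, completing the proof.
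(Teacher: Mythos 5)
Your proof is correct, and it lands on the same core fact as the paper's — the vanishing of the diagonal products $Z_{ia,kL}Z_{ja,lL}$ via Proposition \ref{ppp1}(3) applied to $X\circ Y$ — but it disposes of the cross terms by a different device. The paper expands $x_{ik}^{(a)}x_{jl}^{(a)}=\sum_{L,L'}Z_{ia,kL}Z_{ja,lL'}$ and kills the terms with $L\ne L'$ by citing Lemma \ref{l00} directly (same row $Y$-coordinate, distinct column $Y$-coordinates), leaving only the diagonal sum; you never form the double sum at all, cutting instead by the projections $P_a^L$ and summing with $\sum_L P_a^L=1$. Your diagnosis of the obstacle is exactly right — naive term-by-term use of Proposition \ref{ppp1}(3) fails on the terms with $L\sim_Y L'$ — and Lemma \ref{l00} versus your projection cut are simply the two ways across that gap; they are logically close, since the identity $x_{ij}^{(a)}P_a^b=Z_{ia,jb}$ powering your cut is itself a consequence of Lemma \ref{l00} and column orthogonality. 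One point of rigor: that identity appears only inside the paper's proof of Lemma \ref{X0}, not in its statement (which records just the commutation $x_{ij}^{(a)}P_a^b=P_a^bx_{ij}^{(a)}$), so you should re-derive it in one line,
$$x_{ij}^{(a)}P_a^b=\sum_{L=1}^n\sum_{s=1}^p Z_{ia,jL}Z_{sa,jb}=\sum_{s=1}^p Z_{ia,jb}Z_{sa,jb}=Z_{ia,jb},$$
rather than attribute it to the lemma (the paper itself uses it later, in the proof of Theorem \ref{T1}, so this is a citation nicety, not a gap). What your route buys is a cleaner reduction — a single product for each $L$, with the $k=l$ case absorbed by column orthogonality of $Z$; what the paper's route buys is economy, staying entirely within the stated lemmas and one direct application of Lemma \ref{l00}.
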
\smallskip

\begin{proof}  Lemma \ref{X0} says that $x^{(a)}$ is magic unitary. Let $i,i',j,j'\in[1,p]$ such that $i\sim_X i'$ and $j\not\sim_X j'$, using
$$(i,L)\sim_{X\circ Y} (i',L)\Leftrightarrow i\sim_X i',\ \ \text{for all } L\in[1,n]$$
and from Lemma \ref{l00}, we obtain

$$x_{ij}^{(a)}x_{i'j'}^{(a)}=\sum_{L=1}^n{  \sum_{L'=1}^n  {  Z_{ia,jL}Z_{i'a,j'L'}    } }= \sum_{L=1}^n  {  Z_{ia,jL}Z_{i'a,j'L}    } =0$$
$$x_{ji}^{(a)}x_{j'i'}^{(a)}=\sum_{L=1}^n{     \sum_{L'=1}^n  {Z_{ja,iL}Z_{j'a,i'L'}     }          }=\sum_{L=1}^n{     Z_{ja,iL}Z_{j'a,i'L}             }=0$$
so by Proposition \ref{ppp1} we get $x^{(a)}d_X=d_Xx^{(a)}$.
\end{proof}

We are now ready to prove Theorem \ref{T1} by showing that the surjective morphism of Woronowicz algebras
\begin{eqnarray*}
\Phi: A(X\circ Y)&\to &A(X)*_wA(Y)\\
Z_{ia,jb} &\mapsto & w_{ia,jb}=u_{ij}^{(a)}v_{ab} \\
\end{eqnarray*}
 is an isomorphism.\medskip

\begin{proof}[Proof of Theorem \ref{T1}]
By Propositions \ref{l2} and \ref{X2}, the matrices $P$ and $x^{(a)}$ are magic unitary and commute respectively with $d_Y$ and $d_X$. By Lemma \ref{X0} we know that for all $i,j\in[1,p]$, $a,b\in[1,n]$, $x_{ij}^{(a)}$ and $P_a^b$ commute. This gives us a Woronowicz algebra morphism


\begin{eqnarray*}
\pi: A(X)*_wA(Y)&\to &A(X\circ Y)\\
u_{ij}^{(a)} &\mapsto &  x_{ij}^{(a)}\\
v_{ab} &\mapsto & P_a^b\\
\end{eqnarray*}
which is inverse to $\Phi$ since $$x_{ij}^{(a)}P_a^b=Z_{ia,jb},\ \ \ \sum_{b=1}^n{u_{ij}^{(a)}v_{ab}}=u_{ij}^{(a)}\ \ \ \ \ \ \text{and}\ \ \ \ \ \sum_{s=1}^p{u_{s1}^{(a)}v_{ab}}=v_{ab}$$
This concludes the proof.\end{proof}

\begin{example}Since for $n\ge 5$ the graphs $C_n$ and $C_n^c$ are connected, for all regular graphs $Y$ we have
$$A(C_n\circ Y)\simeq C(\mathcal{D}_n)*_w A(Y):=C(\mathcal{D}_n\wr_*\mathbb{G}_Y)$$
\end{example}

\begin{example}\label{Ex1} Consider the graphs $\{ K_n\circ C_{3k}\ \mid n\ge 2,\ k\ge 2\}$, where $K_n$ is the complete graph with $n$ vertices. Their spectra are as follows:
$$Sp(K_n)=\{-1,n-1\},\ \ \ \ \ Sp(C_{3k})=\left\{2\cos\left( \frac{2s\pi}{3k} \right)\ \mid\ s=1,\dots,3k\right\}\supset\{-1\}$$
Hence the graph $K_n\circ C_{3k}$ does not satisfy the assumption of Theorem \ref{BB}, but satisfy those of Theorem \ref{T1} since $K_n$ is connected and $T_{C_n}=\emptyset$ when $n\ge 4$. We obtain
$$A(K_n\circ C_{3k})\simeq A(K_n)*_wA(C_{3k})=A_s(n)*_wC(\mathcal{D}_{3k}):= C(S_n^+\wr_*\mathcal{D}_{3k})$$\smallskip
\end{example}

\section{Proof of the key lemma}\smallskip\smallskip

First we recall the statement of the key lemma.\smallskip

\begin{lem}\label{LLL} Let $X$ and $Y$ be two regular graphs such that:

$$(S_Y\ne\emptyset\Rightarrow X\text{  is connected })\text{ and  }(T_Y\ne\emptyset\Rightarrow X^c\text{ is connected})$$\smallskip

Then for all $L,J\in[1,n]$, we have:
$$\forall k_1,k_2\in [1,p],\ \  P_L^J(k_1)=P_L^J(k_2)$$
where $P_L^J(k)=\sum_{s=1}^p{Z_{sL,kJ}}$. Hence $P_L^J(k)$ does not depend on $k$.
\end{lem}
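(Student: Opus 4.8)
The plan is to work directly with the column projections $P_L^J(k)=\sum_{s=1}^{p}Z_{sL,kJ}$ and to squeeze out of the magic unitary relations the fact that they do not depend on $k$. Since for fixed $(k,J)$ the entries $Z_{sL,kJ}$ all lie in one column of $Z$, the $P_L^J(k)$ are, for fixed $k$, pairwise orthogonal projections summing to $1$ over $L$. The first step is to translate the graph relations into vanishing rules. Applying Proposition \ref{ppp1}(3) to $X\circ Y$, and using that two vertices in distinct blocks $L\ne M$ are adjacent in $X\circ Y$ if and only if $L\sim_Y M$ (irrespective of the $X$-coordinates), I would prove: for $k\sim_X k'$ one has $P_L^J(k)P_M^J(k')=0$ whenever $L\ne M$ and $L\not\sim_Y M$; and for $k\ne k'$ with $k\not\sim_X k'$ one has $P_L^J(k)P_M^J(k')=0$ whenever $L\ne M$ and $L\sim_Y M$. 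These say that the ``block map'' $k\mapsto(P_\bullet^J(k))$ carries the $X$-adjacency type of a pair $(k,k')$ to a compatible $Y$-adjacency type of the supporting blocks.

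It is instructive to dispose first of the two extreme cases, which expose the role of the hypotheses. If $L$ is isolated in $Y$, then for $k\sim_X k'$ the first rule kills $P_L^J(k)P_M^J(k')$ for every $M\ne L$, so $P_L^J(k)=P_L^J(k)P_L^J(k')$; taking adjoints and combining with the same identity for the pair $(k',k)$ gives $P_L^J(k)=P_L^J(k')$, and connectedness of $X$ then propagates this equality across the whole source block. Dually, if $L$ is universal in $Y$ the second rule yields equality along $X^{c}$-edges and connectedness of $X^{c}$ concludes. This recovers the cases $Y=X_n$ and $Y=K_n$; note that there $Y$ consists entirely of false, respectively true, twins, so $S_Y\ne\emptyset$, respectively $T_Y\ne\emptyset$, and the hypotheses deliver exactly the connectivity used.

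For a general regular $Y$ the situation is genuinely harder, and this is the step I expect to be the main obstacle. A single adjacency step now only forces $P_L^J(k)$ to be ``smeared'' over the blocks in the $Y$-neighbourhood of $L$ (and a single non-adjacency step over its non-neighbourhood), so for a block $L$ that is neither isolated nor universal no pair $(k,k')$ gives $P_L^J(k)=P_L^J(k')$ on the nose: the conclusion is about the block marginals, and the cross terms $P_L^J(k)P_M^J(k')$ with $L\ne M$ need not vanish. Consequently the argument cannot be a pure propagation of orthogonality and must become quantitative. Here I would bring in the regularity of $X$ and $Y$: summing the balanced relation of Proposition \ref{ppp1}(2) for $X\circ Y$ over blocks turns the valences $\lambda(X),\alpha(X)$ and the order $p$ into honest scalar coefficients, producing linear identities relating the $P_L^J(k)$ for varying $k$ to the $k$-independent block-totals $\sum_{k'}P_M^J(k')$.

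The crux is then to solve this system. The geometric input is the twin structure of $Y$: the blocks over which $P_L^J(k)$ can genuinely redistribute as $k$ varies are governed by the pairs with equal open neighbourhood (the set $S_Y$) or equal closed neighbourhood (the set $T_Y$). When $S_Y\ne\emptyset$ the hypothesis provides connectedness of $X$, and when $T_Y\ne\emptyset$ it provides connectedness of $X^{c}$; feeding the appropriate connectivity into the linear identities along the corresponding class of $X$- or $X^{c}$-edges should force the marginals $P_L^J(k)$ to be constant in $k$. The whole argument is organized by the duality $X\leftrightarrow X^{c}$, $Y\leftrightarrow Y^{c}$, under which the two vanishing rules, the sets $S_Y$ and $T_Y$, and the two connectivity hypotheses are interchanged while $A(X)=A(X^{c})$, so that it suffices to treat one of the two symmetric halves in detail.
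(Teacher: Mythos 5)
Your opening moves coincide with the paper's: your two vanishing rules are exactly Lemma \ref{L2} (the product $P_L^J(k_1)P_M^J(k_2)$ vanishes whenever the pair $(L,M)$ in $Y$ and the pair $(k_1,k_2)$ in $X$ have different natures, by Proposition \ref{ppp1}(3) applied to $X\circ Y$), and your plan to sum the balanced relation of Proposition \ref{ppp1}(2) over blocks and use regularity to extract scalar coefficients $p$, $\alpha(X)$, $\lambda(X)$ is precisely what the paper's Lemma \ref{L21} carries out. Your "extreme cases" and the observation that twins in $Y$ are the only obstruction to killing cross terms are also correct: the paper's Lemma \ref{L4} shows $P_L(k_1)P_J(k_2)=0$ whenever $\mathcal{W}_Y(L)\cap\mathcal{V}_Y(J)\neq\emptyset$, and by regularity this intersection can only be empty when $(L,J)\in T_Y$ (adjacent case) or $(L,J)\in S_Y$ (non-adjacent case). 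This settles the cases where $T_Y=\emptyset$ or $S_Y=\emptyset$ by pure propagation of orthogonality along edges of $X$ or of $X^c$ --- the paper's cases (1)--(3). (Incidentally, your claim that in general "the argument cannot be a pure propagation of orthogonality" is too pessimistic for these cases; it is one, once the quantitative lemmas have established the vanishing.)

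The genuine gap is the remaining case, $S_Y\neq\emptyset$ and $T_Y\neq\emptyset$ (hence both $X$ and $X^c$ connected), and your sentence "feeding the appropriate connectivity into the linear identities \ldots should force the marginals $P_L^J(k)$ to be constant" is not an argument: this is exactly where the difficulty sits, and the identities do not force constancy directly. What Lemmas \ref{L3} and \ref{L4} actually yield is a dichotomy: for $k_2\in\mathcal{V}_X(k_1)$, either $P_L(k_1)P_J(k_2)=0$ for all $J\neq L$ (whence $P_L(k_1)=P_L(k_2)$), or $|\mathcal{W}_X(k_1)\cap\mathcal{V}_X(k_2)|=\alpha(X)$, i.e. $\mathcal{W}_X(k_1)\subset\mathcal{V}_X(k_2)$; and dually along non-edges, $\mathcal{V}_X(k_1)\subset\mathcal{W}_X(k_2)$. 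Ruling out the bad branch is a separate combinatorial step that uses the two dichotomies and the two connectivities intertwined: the paper partitions $\mathcal{V}_X(k_1)=S_1\sqcup S_2$ according to which alternative holds and shows $S_2=\emptyset$ --- if every element of $S_2$ were adjacent to every element of $S_1$ then $S_2$ would be a nontrivial union of connected components of $X^c$, contradicting connectedness of $X^c$; otherwise a non-edge $(s_1,s_2)$ with $s_1\in S_1$, $s_2\in S_2$ contradicts the dual dichotomy --- and then repeats the argument for non-neighbours using connectedness of $X$. Nothing at the level of generality of your sketch can substitute for this: the linear identities hold with no connectivity hypothesis at all, while the conclusion is false without it (e.g. $X$ disconnected, $S_Y\neq\emptyset$), so the conclusion cannot follow from the identities plus an unspecified appeal to connectivity; the specific graph-theoretic exclusion argument is the missing core of the proof.
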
\smallskip

\begin{remark}In the case $Y=X_n$ and $X$ connected in \cite{art1}, the proof is much quicker and in \cite{art2} with spectral assumptions on $X$ and $Y$, the keypoint is the property that the eigenspaces of $d_X$ are preserved by the natural coaction of $A(X)$.
\end{remark}\smallskip

Considering the switch of vertices of $Y$ we just need to prove the case $J=1$. Then we denote $P_L^1(k):=P_L(k)$. We begin with four lemmas to understand the product $P_L(k_1)P_J(k_2)$ for different values of $L,J,k_1$ and $k_2$.

\begin{lem}\label{L2} Let $L,J\in[1,n]$, $L\ne J$ and $k_1,k_2\in[1,p]$, $k_1\ne k_2$ such that $(L,J)_Y$ and $(k_1,k_2)_X$ do not have the same nature. Then we have \smallskip
$$P_L(k_1)P_J(k_2)=0$$

\end{lem}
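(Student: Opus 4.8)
The plan is to expand the product directly and show that it vanishes term by term. Writing out the definitions,
$$P_L(k_1)P_J(k_2)=\Big(\sum_{s=1}^p Z_{sL,k_11}\Big)\Big(\sum_{t=1}^p Z_{tJ,k_21}\Big)=\sum_{s,t=1}^p Z_{sL,k_11}\,Z_{tJ,k_21},$$
so it suffices to prove $Z_{sL,k_11}\,Z_{tJ,k_21}=0$ for every $s,t\in[1,p]$. Since $Z$ is the magic unitary of $A(X\circ Y)$, it commutes with $d_{X\circ Y}$, so Proposition \ref{ppp1} applies to the graph $X\circ Y$; the entire argument will consist in checking the hypotheses of condition $(3)$ of that proposition for the vertices $(s,L),(t,J)$ (rows) and $(k_1,1),(k_2,1)$ (columns). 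The mismatch of natures splits into two cases: either $L\sim_Y J$ and $k_1\not\sim_X k_2$, or $L\not\sim_Y J$ and $k_1\sim_X k_2$.

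The key observation, and the only thing that makes the computation go through uniformly, is that because $L\ne J$ and $k_1\ne k_2$ the relevant adjacencies in $X\circ Y$ are governed by a single factor. Indeed, from the definition of the lexicographic product, for $L\ne J$ we have $(s,L)\sim_{X\circ Y}(t,J)\Leftrightarrow L\sim_Y J$, \emph{independently of $s$ and $t$} (the first disjunct $\alpha=\beta$ is excluded); and for $k_1\ne k_2$ we have $(k_1,1)\sim_{X\circ Y}(k_2,1)\Leftrightarrow k_1\sim_X k_2$. In the first case ($L\sim_Y J$, $k_1\not\sim_X k_2$) the rows $(s,L),(t,J)$ are connected while the columns $(k_1,1),(k_2,1)$ are not, so condition $(3)$ of Proposition \ref{ppp1} gives $Z_{sL,k_11}\,Z_{tJ,k_21}=0$ for all $s,t$, whence $P_L(k_1)P_J(k_2)=0$.

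For the second case ($L\not\sim_Y J$, $k_1\sim_X k_2$) the roles are reversed: the rows are now disconnected (and distinct, since $L\ne J$) while the columns are connected. Here I would pass to the complement: by the Example in Section 3, $d_{X\circ Y}Z=Zd_{X\circ Y}$ is equivalent to $d_{(X\circ Y)^c}Z=Zd_{(X\circ Y)^c}$, so Proposition \ref{ppp1}$(3)$ also holds for $(X\circ Y)^c$. Applying it there, disconnected-distinct rows in $X\circ Y$ become connected rows in the complement and connected columns in $X\circ Y$ become non-adjacent columns in the complement, yielding again $Z_{sL,k_11}\,Z_{tJ,k_21}=0$, hence $P_L(k_1)P_J(k_2)=0$.

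The argument is essentially a bookkeeping exercise, so there is no deep obstacle; notably, it does not use the connectivity hypotheses on $X$ and $X^c$ at all, which is consistent with those being needed only in the later lemmas treating couples of equal nature. The one point requiring care is keeping the row/column roles straight when invoking condition $(3)$ and remembering to switch to the complement graph in the second case rather than trying to force the original relation; once the two cases are correctly matched to $(3)$ for $X\circ Y$ and for $(X\circ Y)^c$, the term-by-term vanishing is immediate.
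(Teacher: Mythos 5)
Your proof is correct and follows essentially the same route as the paper: expand $P_L(k_1)P_J(k_2)$ term by term and note that, since $L\ne J$ and $k_1\ne k_2$, every pair of rows $(s,L),(t,J)$ and of columns $(k_1,1),(k_2,1)$ has mismatched nature in $X\circ Y$, so condition $(3)$ of Proposition \ref{ppp1} kills each summand. The only divergence is your second case (rows non-adjacent, columns adjacent), where the detour through $(X\circ Y)^c$ is valid but unnecessary: condition $(3)$ already provides both identities $u_{ik}u_{jl}=0$ and $u_{ki}u_{lj}=0$, and the second one handles that direction of mismatch directly.
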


\begin{proof} $(L,J)_Y$ and $(k_1,k_2)_X$ do not have the same nature, so by definition of $X\circ Y$, for all $u,t\in[1,p]$, $(tL,uJ)$ and $(k_11,k_21)$ do not have the same nature as well. Using this and Proposition \ref{ppp1}, we get

$$P_L(k_1)P_J(k_2)=\sum_{t=1}^p{\sum_{u=1}^p{Z_{tL,k_11}Z_{uJ,k_21}}    }=0$$ 
and this ends the proof.
\end{proof}

\begin{lem}\label{L21} Let $L,J\in[1,n]$, $L\ne J$ and $k_1,k_2\in[1,p]$, $k_1\ne k_2$ such that $(L,J)_Y$ and $(k_1,k_2)_X$ have the same nature. 

\begin{enumerate}[i.]

\item  For all $Q\in\mathcal{W}_Y(L)\cap\mathcal{V}_Y(J)$, we have:

$$\sum_{s\in\mathcal{W}_X(k_1)\cap\mathcal{V}_X(k_2)}{P_L(k_1)P_Q(s)P_J(k_2)}=pP_L(k_1)P_J(k_2)$$

\item If $Q\in\{L,J\}$ we have:

$$\sum_{s\in\mathcal{W}_X(k_1)\cap\mathcal{V}_X(k_2)}{P_L(k_1)P_Q(s)P_J(k_2)}=\left\lbrace 
\begin{array}{lcl} 
\delta_{Q,L}\alpha(X) P_L(k_1)P_J(k_2)& \text{if} &  J\in \mathcal{V}_Y(L)\\ 
\delta_{Q,J}\lambda(X)P_L(k_1)P_J(k_2) & \text{if} & J\in \mathcal{W}_Y(L)
\end{array}\right.$$




\end{enumerate}\smallskip

\end{lem}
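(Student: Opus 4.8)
The plan is to expand every $P_L(k)=\sum_{s}Z_{sL,k1}$ into the generators of $A(X\circ Y)$ and to analyse the resulting triple sums $\sum Z_{tL,k_11}\,Z_{uQ,s1}\,Z_{wJ,k_21}$ through the combinatorics of $X\circ Y$. The crucial structural point is that all three ``column'' vertices $(k_1,1)$, $(s,1)$, $(k_2,1)$ sit in the single copy indexed by $1\in Y$, so their mutual adjacency is governed entirely by $X$, whereas the adjacency of the ``row'' vertices is governed by $Y$ through their second coordinates $L,Q,J$ (and by $X$ only when two of these coincide). I would repeatedly invoke two vanishing rules coming from Proposition \ref{ppp1}: a product of two entries of $Z$ vanishes as soon as their rows are adjacent while their columns are not, and also (applying the relation to $X^c$, which is legitimate since $Z$ commutes with $d_X$ if and only if it commutes with $d_{X^c}$) as soon as the rows are non-adjacent while the columns are adjacent; to this one adds the orthogonality of distinct entries lying in a common column.

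For part (i) I would first localise the summation index. Since $Q\in\mathcal{W}_Y(L)$ the rows of copies $L$ and $Q$ are never adjacent, so the leftmost pair $Z_{tL,k_11}Z_{uQ,s1}$ forces the columns $(k_1,1),(s,1)$ to be non-adjacent and distinct, i.e. $s\in\mathcal{W}_X(k_1)$; since $Q\in\mathcal{V}_Y(J)$ the rows of copies $Q$ and $J$ are always adjacent, so the rightmost pair forces $(s,1),(k_2,1)$ to be adjacent, i.e. $s\in\mathcal{V}_X(k_2)$. Hence every term with $s\notin W:=\mathcal{W}_X(k_1)\cap\mathcal{V}_X(k_2)$ vanishes, and the stated sum over $W$ may be replaced by the full sum $\sum_{s=1}^p$, which factors as $P_L(k_1)\big(\sum_{s=1}^pP_Q(s)\big)P_J(k_2)=\sum_{u=1}^p P_L(k_1)E_uP_J(k_2)$ with $E_u:=\sum_{s=1}^p Z_{uQ,s1}$. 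The heart of the argument is then to show $P_L(k_1)E_uP_J(k_2)=P_L(k_1)P_J(k_2)$ for each $u$: writing $1-E_u=\sum_{b\ne1}\sum_sZ_{uQ,sb}$ for the part of the row $(u,Q)$ lying in the other copies, each $Z_{uQ,sb}$ is annihilated in the sandwich, because the left pair (non-adjacent rows $L,Q$) would require the copy $b$ to be non-adjacent to $1$ in $Y$, while the right pair (adjacent rows $Q,J$) would require $b$ adjacent to $1$, a contradiction. Summing the resulting identity over the $p$ values of $u$ produces exactly the factor $p$.

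For part (ii) the inner copy $Q$ coincides with an end copy ($L$ or $J$), so the decisive row-pair now lives inside a single copy of $X$ and its adjacency is read off from $X$ rather than from $Y$; combined with the hypothesis that $(L,J)_Y$ and $(k_1,k_2)_X$ have the same nature, this is what produces the case split. In the edge case $J\in\mathcal{V}_Y(L)$ the choice $Q=J$ dies immediately (adjacent rows $L,J$ against the non-adjacent columns $(k_1,1),(s,1)$ forced by $s\in\mathcal{W}_X(k_1)$), whereas for $Q=L$ the leftmost pair $Z_{tL,k_11}Z_{uL,s1}$ survives precisely when $u\in\mathcal{W}_X(t)$; an $E_u$-type contraction identical to part (i) then collapses the $s$-sum, and the count $|\mathcal{W}_X(t)|=\alpha(X)$, constant by regularity of $X$, yields the coefficient $\alpha(X)$. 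The non-edge case $J\in\mathcal{W}_Y(L)$ is the mirror image: now $Q=L$ dies and $Q=J$ survives, the surviving inner row-index ranging over $\mathcal{V}_X$ and producing $\lambda(X)$.

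I expect the main obstacle to be the bookkeeping in the contraction steps rather than any single idea: because the three columns are trapped in the copy $1$ while the rows range over all copies of $Y$, the relations of Proposition \ref{ppp1} mix the copies, and one must check carefully that the ``off-copy'' remainder $1-E_u$ is genuinely annihilated by the sandwich and that, in part (ii), the restriction on the inner row-index decouples cleanly from the $s$-summation so that the cardinalities $\alpha(X)$, $\lambda(X)$ factor out. Keeping track of the orthogonality contributions (the boundary terms $s=k_1$ and $s=k_2$, where two columns coincide) is the other delicate point, since these must be shown to vanish before the replacement of $\sum_{s\in W}$ by the full sum $\sum_{s=1}^p$ is legitimate.
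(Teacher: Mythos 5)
Your proposal is correct, but it takes a genuinely different route from the paper's. The paper never expands the triple products into generators: it applies the linear relation of Proposition \ref{ppp1}(2) to $X\circ Y$ at the row $(i,Q)$ against the two columns $(k_1,1)$ and $(k_2,1)$, observes that the parts of $\mathcal{W}_{X\circ Y}(k_1,1)$ and $\mathcal{W}_{X\circ Y}(k_2,1)$ lying outside copy $1$ coincide, so that the difference of the two relations involves only copy-$1$ columns, and then sums over $i\in[1,p]$ --- this is where the regularity of $X$ enters, producing the constant $\alpha(X)$. This yields one master identity among the elements $P_S(k)$, which, sandwiched between $P_L(k_1)$ and $P_J(k_2)$, gives (i) and (ii) simultaneously by evaluating a scalar coefficient $a_{L,J}(Q)$ case by case, with Lemma \ref{L2} used to trim the index sets. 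You instead avoid the linear relation altogether and work only with the quadratic vanishing rules of Proposition \ref{ppp1}(3) (for $X\circ Y$ and for its complement) together with magic unitarity, localizing the surviving terms of the triple sums and contracting partial row sums to $1$ inside the sandwich, the coefficients $p$, $\alpha(X)$, $\lambda(X)$ appearing as counts of surviving inner row indices. I checked that the delicate points you flag do go through: in part (ii) with $Q=L$ and $J\in\mathcal{V}_Y(L)$, once $u\in\mathcal{W}_X(t)$ is imposed, every term $Z_{uL,sb}$ with $(s,b)\notin\big(\mathcal{W}_X(k_1)\cap\mathcal{V}_X(k_2)\big)\times\{1\}$ is killed either by the left pair (when $b=1$ and $s\in\mathcal{V}_X(k_1)\cup\{k_1\}$, or $b\ne 1$ and $b\sim_Y 1$) or by the right pair (when $b=1$ and $s\in\mathcal{W}_X(k_2)\cup\{k_2\}$, or $b\ne 1$ and $b\not\sim_Y 1$), so the partial row sum acts as the identity and summing over $u\in\mathcal{W}_X(t)$ gives $\alpha(X)$ by regularity; the mirror case works with $u\in\mathcal{V}_X(w)$ and $\lambda(X)$. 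Your route is more elementary and shows slightly more --- your part (i) uses neither the regularity of $X$ nor the same-nature hypothesis, both of which are wired into the paper's master identity --- and it makes the combinatorial origin of the coefficients transparent, at the price of heavier term-by-term bookkeeping; the paper's route buys uniformity and brevity, since one identity plus a coefficient computation dispatches all cases at once. One small slip: the complement rule is legitimate because $Z$ commutes with $d_{X\circ Y}$ if and only if it commutes with $d_{(X\circ Y)^c}$, not because of ``$d_X$ versus $d_{X^c}$'', but this is clearly what you meant.
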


\begin{proof} By Proposition \ref{ppp1}, for all $i,j\in[1,p]$ and $I,J\in[1,n]$ we have

$$\sum_{(s,S)\in\mathcal{W}_{X\circ Y}(i,I)}{Z_{sS,jJ}}=\sum_{(s,S)\in\mathcal{W}_{X\circ Y}(j,J)}{Z_{iI,sS}}$$\smallskip

Then for all $(i,Q)\in[1,p]\times[1,n]$ we have 

$$\sum_{(s,S)\in\mathcal{W}_{X\circ Y}(i,Q)}{Z_{sS,k_21}}=\sum_{(s,1)\in\mathcal{W}_{X\circ Y}(k_2,1)}{Z_{iQ,s1}}+\sum_{(s,S)\in\mathcal{W}_{X\circ Y}(k_2,1)\atop S\ne 1}{Z_{iQ,sS}}$$

$$\text{and}\ \ \ \sum_{(s,S)\in\mathcal{W}_{X\circ Y}(i,Q)}{Z_{sS,k_11}}=\sum_{(s,1)\in\mathcal{W}_{X\circ Y}(k_1,1)}{Z_{iQ,s1}}+\sum_{(s,S)\in\mathcal{W}_{X\circ Y}(k_1,1)\atop S\ne 1}{Z_{iQ,sS}}\ \ \ \ \ \ \ \ $$

We have, by definition of $X\circ Y$,
$$\{(s,S)\in\mathcal{W}_{X\circ Y}(k_2,1),\ S\ne 1\}=\{(s,S)\in\mathcal{W}_{X\circ Y}(k_1,1),\ S\ne 1\}$$

so we get
$$\sum_{(s,S)\in\mathcal{W}_{X\circ Y}(i,Q)}{(Z_{sS,k_21}-Z_{sS,k_11})}=\sum_{(s,1)\in\mathcal{W}_{X\circ Y}(k_2,1)\atop }{Z_{iQ,s1}} -\sum_{(s,1)\in\mathcal{W}_{X\circ Y}(k_1,1)}{Z_{iQ,s1}}\ \ \ \ (*)$$

Summing over $i\in[1,p]$ we obtain
$$\sum_{i=1}^p{\left(\sum_{(s,1)\in\mathcal{W}_{X\circ Y}(k_2,1)\atop }{Z_{iQ,s1}} -\sum_{(s,1)\in\mathcal{W}_{X\circ Y}(k_1,1)}{Z_{iQ,s1}}\right)}=\sum_{s\in\mathcal{W}_{X}(k_2) }{P_Q(s)} -\sum_{s\in\mathcal{W}_{X}(k_1)}{P_Q(s)}$$

\begin{eqnarray*} 
\sum_{i=1}^p{\left(\sum_{(s,S)\in\mathcal{W}_{X\circ Y}(i,Q)}{Z_{sS,k_21}}\right)} &=&\sum_{i=1}^p{\left(\sum_{s\in\mathcal{W}_{X}(i)}{Z_{sQ,k_21}}\right)}+\sum_{i=1}^p{\left(\sum_{S\in\mathcal{W}_{Y}(Q)}{\sum_{s=1}^p{Z_{sS,k_21}}}\right)} \\
&=&\sum_{s=1}^p{\left(\sum_{i\in\mathcal{W}_{X}(s)}{Z_{sQ,k_21}}\right)}+\sum_{i=1}^p{\left(\sum_{S\in\mathcal{W}_{Y}(Q)}{  P_S(k_2) }\right)} \\
&=&\sum_{s=1}^p{\alpha(X)Z_{sQ,k_21} }+p\left(\sum_{S\in\mathcal{W}_{Y}(Q)}{  P_S(k_2) }\right) \\
&=&\alpha(X)P_Q(k_2)+p\left(\sum_{S\in\mathcal{W}_{Y}(Q)}{  P_S(k_2) }\right)\\
\end{eqnarray*}
and in the same way:
$$\sum_{i=1}^p{\left(\sum_{(s,S)\in\mathcal{W}_{X\circ Y}(i,Q)}{Z_{sS,k_11}}\right)}=\alpha(X)P_Q(k_1)+p\left(\sum_{S\in\mathcal{W}_{Y}(Q)}{  P_S(k_1) }\right)$$

Using $(*)$, we obtain
$$p\left( \sum_{S\in\mathcal{W}_{Y}(Q)}{  (P_S(k_2)-P_S(k_1)) } \right)+\alpha(X)(P_Q(k_2)-P_Q(k_1))=\sum_{s\in\mathcal{W}_{X}(k_2) }{P_Q(s)} -\sum_{s\in\mathcal{W}_{X}(k_1)}{P_Q(s)}$$

Since for all $S,S',k$, $P_S(k)P_{S'}(k)=\delta_{S,S'}P_S(k)$, by multiplying by $P_L(k_1)$ on the left and by $P_J(k_2)$ on the right the above equality, we have

$$a_{L,J}(Q)P_L(k_1)P_J(k_2)+\sum_{s\in\mathcal{W}_{X}(k_1)}{P_L(k_1)P_Q(s)P_J(k_2)}=
\sum_{s\in\mathcal{W}_{X}(k_2)}{P_L(k_1)P_Q(s)P_J(k_2)}\ \ \ \ \ \ \ \ \ \ \ \ (**)$$
where $a_{L,J}(Q)=(\delta_{J\in\mathcal{W}_Y(Q)}-\delta_{L\in\mathcal{W}_Y(Q)})p+(\delta_{Q,J}-\delta_{Q,L})\alpha(X)$.\bigskip

We now deal separately with the cases $Q\in \mathcal{W}_Y(L)\cap\mathcal{V}_Y(J)$ and $Q\in\{L,J\}$.\medskip

If $Q\in\mathcal{W}_Y(L)\cap\mathcal{V}_Y(J)$, then $a_{L,J}(Q)=-p$ and $(**)$ gives

\begin{eqnarray*} 
pP_L(k_1)P_J(k_2)&=&\sum_{s\in\mathcal{W}_{X}(k_1)}{P_L(k_1)P_Q(s)P_J(k_2)}-\sum_{s\in\mathcal{W}_{X}(k_2)}{P_L(k_1)P_Q(s)P_J(k_2)} \\
&=&\sum_{s\in\mathcal{W}_{X}(k_1)\atop s\in\mathcal{V}_{X}(k_2)}{P_L(k_1)P_Q(s)P_J(k_2)}-\sum_{s\in\mathcal{W}_{X}(k_2)\atop s\in\mathcal{V}_{X}(k_2)}{P_L(k_1)P_Q(s)P_J(k_2)} \ \ \ \ \  (\text{by Lemma }\ref{L2})\\
&=&\sum_{s\in\mathcal{W}_{X}(k_1)\atop s\in\mathcal{V}_{X}(k_2)}{P_L(k_1)P_Q(s)P_J(k_2)}\\
\end{eqnarray*}
which gives $(i)$.

Now assume that $Q\in\{L,J\}$. \smallskip

\begin{itemize}
\item If $Q=L$ and $J\in\mathcal{V}_Y(L)$ then $a_{L,J}(Q)=a_{L,J}(L)=-\alpha(X)$ so from $(**)$ we get

\begin{eqnarray*} 
\alpha(X)P_L(k_1)P_J(k_2)&=&\sum_{s\in\mathcal{W}_{X}(k_1)}{P_L(k_1)P_L(s)P_J(k_2)}-\sum_{s\in\mathcal{W}_{X}(k_2)}{P_L(k_1)P_L(s)P_J(k_2)} \\
&=&\sum_{s\in\mathcal{W}_X(k_1)\cap\mathcal{V}_X(k_2)}{P_L(k_1)P_L(s)P_J(k_2)} \ \ \ \ \ \ \ \ \ \ \ \ \  (\text{by Lemma }\ref{L2})\\
\end{eqnarray*}\smallskip

\item If $Q=L$ and $J\in\mathcal{W}_Y(L)$ then for all $s\in\mathcal{V}_X(k_2)$, $P_Q(s)P_J(k_2)=0$ by Lemma \ref{L2}. Therefore we obtain

$$\sum_{s\in\mathcal{W}_X(k_1)\cap\mathcal{V}_X(k_2)}{P_L(k_1)P_L(s)P_J(k_2)}=0$$\smallskip

\item If $Q=J$ and $J\in\mathcal{W}_Y(L)$, then $a_{L,J}(Q)=a_{L,J}(J)=\alpha(X)-p=-\lambda(X)-1$ and with $(**)$ we obtain

\begin{eqnarray*} 
(\lambda(X)+1)P_L(k_1)P_J(k_2)&=&\sum_{s\in\mathcal{W}_{X}(k_1)}{P_L(k_1)P_J(s)P_J(k_2)}-\sum_{s\in\mathcal{W}_{X}(k_2)}{P_L(k_1)P_J(s)P_J(k_2)}\\
&=&\sum_{s\in\mathcal{W}_{X}(k_1)}{P_L(k_1)P_J(s)P_J(k_2)}-\sum_{s\in\mathcal{W}_{X}(k_2)\atop s\in\mathcal{W}_{X}(k_1)}{P_L(k_1)P_J(s)P_J(k_2)}\\
&=&\sum_{s\in\mathcal{W}_{X}(k_1)\cap(\mathcal{V}_{X}(k_2)\cup\{k_2\} )}{P_L(k_1)P_J(s)P_J(k_2)}\\
&=&P_L(k_1)P_J(k_2)+\sum_{s\in\mathcal{W}_{X}(k_1)\cap\mathcal{V}_{X}(k_2)}{P_L(k_1)P_J(s)P_J(k_2)}\\
\end{eqnarray*}
which gives
$$\sum_{s\in\mathcal{W}_{X}(k_1)\cap\mathcal{V}_{X}(k_2)}{P_L(k_1)P_J(s)P_J(k_2)}=\lambda(X)P_L(k_1)P_J(k_2)$$\smallskip

\item Finally if $Q=J$ and $J\in\mathcal{V}_Y(L)$ then for all $s\in\mathcal{W}_X(k_1)$, $P_L(k_1)P_J(s)=0$ by Lemma \ref{L2}. Therefore we obtain

$$\sum_{s\in\mathcal{W}_X(k_1)\cap\mathcal{V}_X(k_2)}{P_L(k_1)P_J(s)P_J(k_2)}=0$$
\end{itemize}
These four equalities prove $(ii)$ and end the proof.
\end{proof}




\begin{lem}\label{L3} Let $L,J\in[1,n]$, $L\ne J$ and $k_1,k_2\in[1,p]$, $k_1\ne k_2$ and put $A=\mathcal{W}_X(k_1)\cap\mathcal{V}_X(k_2)$.
Assume that $\mathcal{W}_Y(L)\cap\mathcal{V}_Y(J)\ne\emptyset$. \smallskip

If $J\in\mathcal{V}_Y(L)$, we have
$$\big(|A|-\alpha(X)-p|\mathcal{W}_Y(L)\cap\mathcal{V}_Y(J)|\big)P_L(k_1)P_J(k_2)=0$$

If $J\in\mathcal{W}_Y(L)$, we have
$$\big(|A|-\lambda(X)-p|\mathcal{W}_Y(L)\cap\mathcal{V}_Y(J)|\big)P_L(k_1)P_J(k_2)=0$$

\end{lem}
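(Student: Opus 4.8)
The plan is to insert a resolution of the identity in the middle slot of the product $P_L(k_1)\,(\cdot)\,P_J(k_2)$ and then classify the resulting terms according to the nature of the vertices of $Y$ involved, evaluating each class by means of Lemma \ref{L21}. First I would dispose of a trivial case: if $(L,J)_Y$ and $(k_1,k_2)_X$ do not have the same nature, then Lemma \ref{L2} gives $P_L(k_1)P_J(k_2)=0$ outright, so both asserted identities hold with no further work. Hence from now on I may assume that $(L,J)_Y$ and $(k_1,k_2)_X$ have the same nature, which is exactly the hypothesis under which Lemma \ref{L21} is available.

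The key observation is that for each fixed $s$ the column-sum relation for the magic unitary $Z$ yields $\sum_{Q=1}^{n}P_Q(s)=1$. Inserting this between $P_L(k_1)$ and $P_J(k_2)$ and summing over $s\in A=\mathcal{W}_X(k_1)\cap\mathcal{V}_X(k_2)$ gives
$$|A|\,P_L(k_1)P_J(k_2)=\sum_{Q=1}^{n}\sum_{s\in A}P_L(k_1)P_Q(s)P_J(k_2).$$
Now I would partition the index $Q$ into four disjoint blocks: $Q\in\mathcal{W}_Y(L)\cap\mathcal{V}_Y(J)$, $Q=L$, $Q=J$, and all remaining values (disjointness follows from $L\notin\mathcal{W}_Y(L)$ and $J\notin\mathcal{V}_Y(J)$). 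For $Q$ in the first block, part (i) of Lemma \ref{L21} makes each inner sum equal to $p\,P_L(k_1)P_J(k_2)$, contributing $p\,|\mathcal{W}_Y(L)\cap\mathcal{V}_Y(J)|$ times the product; for $Q=L$ and $Q=J$, part (ii) of Lemma \ref{L21} supplies the values $\delta_{Q,L}\alpha(X)$ or $\delta_{Q,J}\lambda(X)$ times the product, according to whether $J\in\mathcal{V}_Y(L)$ or $J\in\mathcal{W}_Y(L)$.

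The point to check carefully, which is the main obstacle, is that every remaining $Q$ contributes zero, and this is precisely the nature-bookkeeping step: for $s\in A$ one has $k_1\not\sim_X s$ (type $0$) and $s\sim_X k_2$ (type $1$). If $Q\notin\mathcal{W}_Y(L)$ then $L\sim_Y Q$ clashes with the type of $(k_1,s)_X$, so $P_L(k_1)P_Q(s)=0$ by Lemma \ref{L2}; if instead $Q\notin\mathcal{V}_Y(J)$ then $J\not\sim_Y Q$ clashes with the type of $(s,k_2)_X$, so $P_Q(s)P_J(k_2)=0$ again by Lemma \ref{L2}. Thus the complementary block vanishes entirely.

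Assembling the four blocks then gives, in the case $J\in\mathcal{V}_Y(L)$,
$$|A|\,P_L(k_1)P_J(k_2)=\big(\alpha(X)+p\,|\mathcal{W}_Y(L)\cap\mathcal{V}_Y(J)|\big)P_L(k_1)P_J(k_2),$$
the surviving contribution among $Q=L,Q=J$ being $\alpha(X)$ and the other vanishing through its Kronecker delta; in the case $J\in\mathcal{W}_Y(L)$ the identical computation holds with $\alpha(X)$ replaced by $\lambda(X)$. Rearranging each identity yields the two stated equations. Everything beyond the vanishing of the complementary block is a direct substitution of the values furnished by Lemma \ref{L21}, so the only genuinely delicate bookkeeping is the matching of the two $X$-natures of the pairs $(k_1,s)$ and $(s,k_2)$ against the $Y$-natures of $(L,Q)$ and $(Q,J)$.
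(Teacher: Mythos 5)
Your proof is correct and takes essentially the same route as the paper's: insert the resolution of identity $\sum_{Q}P_Q(s)=1$ between $P_L(k_1)$ and $P_J(k_2)$, discard all $Q$ outside $\big(\mathcal{W}_Y(L)\cap\mathcal{V}_Y(J)\big)\cup\{L\}\cup\{J\}$ by the nature clash of Lemma \ref{L2}, and evaluate the surviving blocks with Lemma \ref{L21}. The only (cosmetic) difference is that by summing over $s\in A$ from the outset your identity remains valid when $A=\emptyset$, so you bypass the paper's separate treatment of that case, which it settles directly from part (i) of Lemma \ref{L21}.
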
\smallskip

\begin{proof} If $(L,J)_Y$ and $(k_1,k_2)_X$ do not have the same nature then by Lemma \ref{L2} we get $P_L(k_1)P_J(k_2)=0$. We now assume that they have the same nature. Since $\mathcal{W}_Y(L)\cap\mathcal{V}_Y(J)\ne\emptyset$, if $A=\emptyset$, by $(i)$ of Lemma \ref{L21}, we obtain again $P_L(k_1)P_J(k_2)=0$. Now assume that $A\ne\emptyset$.  \smallskip

For all $s\in A$, we have
\begin{eqnarray*} 
P_L(k_1)P_J(k_2)&=&P_L(k_1)\left(\sum_{Q=1}^n{P_Q(s)}\right)P_J(k_2) \\
&=&P_L(k_1)\left(\sum_{Q\in\mathcal{W}_Y(L)\cap\mathcal{V}_Y(J)\cup\{L\}\cup\{J\}}{P_Q(s)}\right)P_J(k_2)\ \ \ \ \ \ \ \text{(by Lemma \ref{L2})} \\
\end{eqnarray*}

and by summing over $s\in A$ we have
$$|A|P_L(k_1)P_J(k_2)=\sum_{Q\in\mathcal{W}_Y(L)\cap\mathcal{V}_Y(J)\cup\{L\}\cup\{J\}}{     \left( \sum_{s\in A}{P_L(k_1)P_Q(s)P_J(k_2)}      \right)     }$$\smallskip

Finaly using Lemma \ref{L21} we get
$$|A|P_L(k_1)P_J(k_2)=\left\lbrace 
\begin{array}{lcl} 
\alpha(X) P_L(k_1)P_J(k_2)+ p|\mathcal{W}_Y(L)\cap\mathcal{V}_Y(J)|P_L(k_1)P_J(k_2)& \text{if} &J\in \mathcal{V}_Y(L)\\ 
\lambda(X) P_L(k_1)P_J(k_2)+ p|\mathcal{W}_Y(L)\cap\mathcal{V}_Y(J)|P_L(k_1)P_J(k_2) & \text{if} & J\in \mathcal{W}_Y(L)
\end{array}\right.$$
which gives the result.
\end{proof}

\begin{lem}\label{L4} Let $L,J\in[1,n]$, $L\ne J$ and $k_1,k_2\in[1,p]$, $k_1\ne k_2$. Then
$$|\mathcal{W}_Y(L)\cap\mathcal{V}_Y(J)|\ne 0\Longrightarrow P_L(k_1)P_J(k_2)=0$$
In particular,
$$\big(    \big(   k_2\in\mathcal{V}_X(k_1)\ \text{ and }\ T_Y=\emptyset   \big)     \ \text{or}\  \big( k_2\in\mathcal{W}_X(k_1)\ \text{ and }\  S_Y=\emptyset  \big) \big)\Rightarrow P_L(k_1)P_J(k_2)=0$$

\end{lem}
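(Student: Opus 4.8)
The plan is to derive the first implication directly from Lemma \ref{L3} by a purely combinatorial counting estimate, then to bootstrap it, via self-adjointness of the $P_L(k)$, into a symmetric statement from which the ``in particular'' clause drops out by a neighbourhood comparison.

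First I would prove that $|\mathcal{W}_Y(L)\cap\mathcal{V}_Y(J)|\ne 0$ forces $P_L(k_1)P_J(k_2)=0$. Put $A=\mathcal{W}_X(k_1)\cap\mathcal{V}_X(k_2)$ and $m=|\mathcal{W}_Y(L)\cap\mathcal{V}_Y(J)|\ge 1$, so that the hypothesis of Lemma \ref{L3} is met. The key observation is that $A\subseteq\mathcal{W}_X(k_1)$ and $A\subseteq\mathcal{V}_X(k_2)$, whence $|A|\le\alpha(X)$ and $|A|\le\lambda(X)$ by regularity of $X$. If $J\in\mathcal{V}_Y(L)$, the scalar coefficient $|A|-\alpha(X)-pm$ appearing in Lemma \ref{L3} is at most $-pm\le -p<0$; if $J\in\mathcal{W}_Y(L)$, the coefficient $|A|-\lambda(X)-pm$ is likewise at most $-pm<0$. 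In both cases it is a nonzero scalar, so it can be cancelled and $P_L(k_1)P_J(k_2)=0$.

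Next I would record the symmetric form of this implication. Each $P_L(k_1)=\sum_{s}Z_{sL,k_11}$ is a sum of mutually orthogonal projections lying in a single column of the magic unitary $Z$, hence is itself a self-adjoint projection, and the same holds for $P_J(k_2)$; therefore $(P_L(k_1)P_J(k_2))^{*}=P_J(k_2)P_L(k_1)$. Applying the first implication with $(L,J,k_1,k_2)$ replaced by $(J,L,k_2,k_1)$ shows that $\mathcal{W}_Y(J)\cap\mathcal{V}_Y(L)\ne\emptyset$ also forces $P_J(k_2)P_L(k_1)=0$, and passing to adjoints gives $P_L(k_1)P_J(k_2)=0$. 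Thus the product vanishes as soon as \emph{either} $\mathcal{W}_Y(L)\cap\mathcal{V}_Y(J)$ \emph{or} $\mathcal{W}_Y(J)\cap\mathcal{V}_Y(L)$ is nonempty. For the ``in particular'' clause I would then argue by contradiction, assuming $P_L(k_1)P_J(k_2)\ne 0$: by Lemma \ref{L2} the couples $(L,J)_Y$ and $(k_1,k_2)_X$ have the same nature, and by the above both $\mathcal{V}_Y(J)\subseteq\mathcal{V}_Y(L)\cup\{L\}$ and $\mathcal{V}_Y(L)\subseteq\mathcal{V}_Y(J)\cup\{J\}$. If $k_2\in\mathcal{V}_X(k_1)$ then $L\sim_Y J$, so $J\in\mathcal{V}_Y(L)$ and $L\in\mathcal{V}_Y(J)$, and the two inclusions give $\mathcal{V}_Y(L)\cup\{L\}=\mathcal{V}_Y(J)\cup\{J\}$, i.e.\ $(L,J)\in T_Y$, contradicting $T_Y=\emptyset$. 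If $k_2\in\mathcal{W}_X(k_1)$ then $L\not\sim_Y J$, so $L\notin\mathcal{V}_Y(J)$ and $J\notin\mathcal{V}_Y(L)$, and the two inclusions collapse to $\mathcal{V}_Y(L)=\mathcal{V}_Y(J)$, i.e.\ $(L,J)\in S_Y$, contradicting $S_Y=\emptyset$.

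The counting estimate is routine once Lemma \ref{L3} is in hand; the step I expect to be the crux is recognizing that non-vanishing of $P_L(k_1)P_J(k_2)$ forces equality of the open (resp.\ closed) neighbourhoods of $L$ and $J$ in $Y$ — which is exactly where Sabidussi's sets $S_Y$ and $T_Y$ enter — together with the realization that securing \emph{both} reverse inclusions requires the symmetric version of the first implication, and hence the self-adjointness of the elements $P_L(k)$.
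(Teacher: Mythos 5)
Your proof is correct, and although it shares the paper's skeleton --- Lemma \ref{L3} drives the first implication, and the ``in particular'' clause comes from showing that otherwise $(L,J)$ would lie in $T_Y$ or $S_Y$ --- it deviates in two substantive ways, both to your advantage. For the first implication, the paper bounds $|A|$ only by $p$, so its coefficient estimate degenerates when $\alpha(X)=0$ (i.e.\ $X=K_p$) or $\lambda(X)=0$ (i.e.\ $X=X_p$), and it must dispose of those cases separately through part (i) of Lemma \ref{L21} (there $A=\emptyset$, so the sum in that identity is empty and the product vanishes). Your observation that regularity of $X$ gives $|A|\le\alpha(X)$, resp.\ $|A|\le\lambda(X)$, pushes the coefficient down to at most $-p\,|\mathcal{W}_Y(L)\cap\mathcal{V}_Y(J)|<0$ uniformly, so this case analysis disappears; invoking Lemma \ref{L3} directly is legitimate here since its statement carries no same-nature hypothesis. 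For the ``in particular'' clause, the paper uses only the single inclusion $\mathcal{V}_Y(J)\subseteq\mathcal{V}_Y(L)\cup\{L\}$ and upgrades it to the required equality by comparing cardinalities, which invokes regularity of $Y$; you instead manufacture the reverse inclusion $\mathcal{V}_Y(L)\subseteq\mathcal{V}_Y(J)\cup\{J\}$ by applying the first implication to $(J,L,k_2,k_1)$ and taking adjoints, using that each $P_L(k)$, being a sum of pairwise orthogonal projections from a single column of $Z$, is a self-adjoint projection. That symmetrization is sound and replaces a counting step by a $*$-algebraic one (so this step would survive even without regularity of $Y$, though regularity is a standing hypothesis of the key lemma and is needed elsewhere). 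In short: same architecture, but your sharper counting removes the paper's degenerate cases, and your adjoint trick substitutes for the paper's cardinality argument.
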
\smallskip

\begin{proof} If $(L,J)_Y$ and $(k_1,k_2)_X$ do not have the same nature, this follows from Lemma \ref{L2}. We now assume that they have the same nature and begin with the case $k_2\in\mathcal{V}_X(k_1)$ (and $J\in\mathcal{V}_Y(L)$). By assumption $|\mathcal{W}_Y(L)\cap\mathcal{V}_Y(J)|\ge 1$ so we get $\alpha(X)+p|\mathcal{W}_Y(L)\cap\mathcal{V}_Y(J)|\ge p+\alpha(X)$ and by definition $|\mathcal{W}_X(k_1)\cap\mathcal{V}_X(k_2)|\le p$ so we obtain 

$$\alpha(X)+p|\mathcal{W}_Y(L)\cap\mathcal{V}_Y(J)|-|\mathcal{W}_X(k_1)\cap\mathcal{V}_X(k_2)|\ge \alpha(X)$$\smallskip

If $\alpha(X)>0$, Lemma \ref{L3} gives the result, otherwise $\alpha(X)=0$ (which means $X=K_p$) then $\mathcal{W}_X(k_1)=\emptyset$ and using $(i)$ of Lemma $\ref{L21}$, we can conclude since $\mathcal{W}_Y(L)\cap\mathcal{V}_Y(J)\ne \emptyset$.

Now we deal with the case $k_2\in\mathcal{W}_X(k_1)$ (and $J\in\mathcal{W}_Y(L)$). In the same way as before, we prove that

$$\lambda(X)+p|\mathcal{W}_Y(L)\cap\mathcal{V}_Y(J)|-|\mathcal{W}_X(k_1)\cap\mathcal{V}_X(k_2)|\ge \lambda(X)$$\smallskip

If $\lambda(X)>0$ we conclude by Lemma $\ref{L3}$, otherwise we have $X=X_p$ and $\mathcal{V}_X(k_2)=\emptyset$, so $(i)$ of Lemma \ref{L21} gives us the result.\smallskip

For the last result we just need to prove that when $(L,J)_Y$ and $(k_1,k_2)_X$ have the same nature we have

$$\big(\big(   k_2\in\mathcal{V}_X(k_1)\ \text{ and }\ T_Y=\emptyset   \big)     \ \text{or}\  \big( k_2\in\mathcal{W}_X(k_1)\ \text{ and }\  S_Y=\emptyset  \big)\big)\Longrightarrow|\mathcal{W}_Y(L)\cap\mathcal{V}_Y(J)|\ne 0$$\smallskip

Assume that $|\mathcal{W}_Y(L)\cap\mathcal{V}_Y(J)|=0$ then $\mathcal{V}_Y(J)\subset(\mathcal{W}_Y(L))^c=\mathcal{V}_Y(L)\cup\{L\}$. If $T_Y=\emptyset$ and $k_2\in\mathcal{V}_X(k_1)$ then $J\in\mathcal{V}_Y(L)$ and we get $\mathcal{V}_Y(J)\cup\{J\}=\mathcal{V}_Y(L)\cup\{L\}$ by inclusion and equality of the cardinals, contradiction. In the same way if $S_Y=\emptyset$ and $k_2\in\mathcal{W}_X(k_1)$, then $L\notin\mathcal{V}_Y(J)$ and therefore $\mathcal{V}_Y(J)=\mathcal{V}_Y(L)$, contradiction.
\end{proof}

We are now ready to prove Lemma \ref{LLL}.

\begin{proof} The assumptions on $X$ and $Y$ can be divided into $4$ cases:

\begin{itemize}
\item (1) $X$ is connected and $T_Y=\emptyset$.
\item (2) $X^c$ is connected and $S_Y=\emptyset$.
\item (3) $T_Y=\emptyset$ and $S_Y=\emptyset$.
\item (4) $X$ and $X^c$ are connected with $S_Y\ne\emptyset$ and $T_Y\ne\emptyset$.
\end{itemize}\bigskip

\underline{Case (1)}: Let $k_2\in\mathcal{V}_X(k_1)$, using Lemma \ref{L4} we have

$$P_L(k_1)=P_L(k_1)\left(\sum_{J=1}^n{P_J(k_2)} \right)=P_L(k_1)P_L(k_2)$$and
$$P_L(k_2)=\left(\sum_{J=1}^n{P_J(k_1)} \right)P_L(k_2)=P_L(k_1)P_L(k_2),$$
therefore $P_L(k_1)=P_L(k_2)$ when $k_2$ and $k_1$ are connected in $X$. Since $X$ is connected, it follows that $P_L(k_1)=P_L(k_2)$  for all $k_1,k_2\in[1,p]$.\bigskip

\underline{Case (2)}: We check that $(X\circ Y)^c=X^c\circ Y^c$ and $T_Y=S_{Y^c}$. Then, by using case $(1)$, we get the result since $A(X^c)=A(X)$.\bigskip

\underline{Case (3)}: Let $k_1, k_2\in [1,p]$ with $k_1\ne k_2$. Using Lemma \ref{L4} for all $L,J\in [1,n]$ with $L\ne J$, we have $P_L(k_1)P_J(k_2)=0$. So similarly to case $(1)$, we obtain $P_L(k_1)=P_L(k_2)$.\bigskip

\underline{Case 4}: First we assume $k_2\in\mathcal{V}_X(k_1)$. If $J\in\mathcal{W}_Y(L)$, then by Lemma \ref{L2}, we have $P_L(k_1)P_J(k_2)=0$. If $J\in\mathcal{V}_Y(L)$, by Lemma \ref{L3}, we get

$$\big(|\mathcal{W}_X(k_1)\cap\mathcal{V}_X(k_2)|-\alpha(X)-p|\mathcal{W}_Y(L)\cap\mathcal{V}_Y(J)|\big)P_L(k_1)P_J(k_2)=0$$\smallskip

Then using Lemma \ref{L4} we have $P_L(k_1)P_J(k_2)=0$ or 
$$|\mathcal{W}_X(k_1)\cap\mathcal{V}_X(k_2)|=\alpha(X)\ \  (=|\mathcal{W}_X(k_1)|)$$

This last equality implies that $\mathcal{W}_X(k_1)\subset\mathcal{V}_X(k_2)$. If for all $J\ne L$ we have $P_L(k_1)P_J(k_2)=0$, using the same computation as in case $(1)$ leads to $P_L(k_1)=P_L(k_2)$. Finally we have

$$(*)\ \ k_2\in\mathcal{V}_X(k_1)\Longrightarrow\big(  \big(P_L(k_1)=P_L(k_2)\big) \text{ or } \big(P_L(k_1)\ne P_L(k_2)\text{ and }\mathcal{W}_X(k_1)\subset\mathcal{V}_X(k_2) \big)   \big)$$\smallskip

By taking the complement graph we also obtain

$$(**)\ \ k_2\in\mathcal{W}_X(k_1)\Longrightarrow\big(  \big(P_L(k_1)=P_L(k_2)\big) \text{ or } \big(P_L(k_1)\ne P_L(k_2)\text{ and }\mathcal{V}_X(k_1)\subset\mathcal{W}_X(k_2) \big)   \big)$$\smallskip

We put:

$$S_1=\{ k_2\in\mathcal{V}_X(k_1)\mid P_L(k_1)=P_L(k_2)\}$$
$$S_2=\{ k_2\in\mathcal{V}_X(k_1)\mid P_L(k_1)\ne P_L(k_2)\ \text{and}\ \mathcal{W}_X(k_1)\subset\mathcal{V}_X(k_2)\}$$\smallskip

We have $\mathcal{V}_X(k_1)=S_1\cup S_2\ne\emptyset$ with disjoint union, by $(*)$. \medskip

Assume that $S_2\ne\emptyset$. If all the elements in $S_2$ are connected with all the elements in $S_1$, then $S_2$ ($\ne X$ since $k_1\notin S_2$) is an isolated graph in $X^c$, a contradiction since $X^c$ is connected. Otherwise there exists $s_1\in S_1$ and $s_2\in S_2$ such that $(s_1, s_2)$ is not an edge in $X$, which means that $s_2\in\mathcal{W}_X(s_1)$. By $(**)$ we obtain two cases:

\begin{itemize}
\item $P_L(s_2)=P_L(s_1)=P_L(k_1)$, which is impossible since $s_2\in S_2$
\item $\mathcal{V}_X(s_2)\subset\mathcal{W}_X(s_1)$, which is impossible since $k_1\in\mathcal{V}_X(s_2)$ and $k_1\notin\mathcal{W}_X(s_1)$
\end{itemize}\smallskip

We conclude that $S_2=\emptyset$. Denoting

$$S_1'=\{ k_2\in\mathcal{W}_X(k_1)\mid P_L(k_1)=P_L(k_2)\}$$
$$S_2'=\{ k_2\in\mathcal{W}_X(k_1)\mid P_L(k_1)\ne P_L(k_2)\ \text{and}\ \mathcal{V}_X(k_2)\subset\mathcal{W}_X(k_1)\}$$\smallskip
we prove as above (using that $X$ is connected) that $S_2'=\emptyset$ and we get

$$\forall k_1, k_2\in[1,p],\ \  P_L(k_1)=P_L(k_2)$$
as required.
\end{proof}\bigskip\bigskip

\end{document}